\documentclass{article}
\usepackage{graphicx}
\usepackage{amsmath,amsfonts, amsthm, amssymb}
\usepackage[a4paper]{geometry}
\usepackage[colorlinks]{hyperref}
\usepackage{graphicx}
\usepackage{cite}
\usepackage[ruled]{algorithm2e}
\usepackage{array}
\usepackage{float}
\usepackage[caption=false,font=normalsize,labelfont=sf,textfont=sf]{subfig}
\usepackage{textcomp}
\usepackage{stfloats}
\usepackage{url}
\usepackage{verbatim}

\newtheorem{theorem}{Theorem}[section]

\newtheorem{prop}[theorem]{Proposition}

\theoremstyle{definition}
\newtheorem{definition}{Definition}[section]
\newtheorem{example}{Example}[section]

\title{Symmetric Sudoku-Type Games from Perfect Codes}
\author{Junmin An\thanks{junmin0518@sogang.ac.kr, Department of Mathematics and Institute for Mathematical and Data Sciences, Sogang University, Seoul, Korea}, Jae-Hyun Baek\thanks{jhbaek@sogang.ac.kr, Department of Mathematics and Institute for Mathematical and Data Sciences, Sogang University, Seoul, Korea}, Keon-Hwi Kim\thanks{keonhwikim@sogang.ac.kr, Department of Mathematics and Institute for Mathematical and Data Sciences, Sogang University, Seoul, Korea}, Haeun Lim\thanks{haeunlim@sogang.ac.kr, Department of Mathematics and Institute for Mathematical and Data Sciences, Sogang University, Seoul, Korea}, Jon-Lark Kim\thanks{corresponding author, jlkim@sogang.ac.kr, Department of Mathematics and Institute for Mathematical and Data Sciences, Sogang University, Seoul, Korea}}
\date{}

\begin{document}
\maketitle

\begin{abstract}
This paper presents a novel construction method for symmetric Sudoku-type games based on Lee distance perfect codes and diameter perfect codes. The proposed method utilizes the tiling property of these codes to define the structure of the subgrid constraints of Sudoku-type games. In this way, our games inherit the symmetric properties of Sudoku. We provide a detailed analysis of two small cases: a $5 \times 5$ Sudoku in $\mathbb{Z}_5^2$, and an $8 \times 8$ Sudoku in $\mathbb{Z}_8^2$. By defining equivalence relations via rigid motions, we provide a complete enumeration of valid grids, identifying 17 inequivalent solutions for $5\times 5$ Sudoku. For two different types of $8\times 8$ Sudoku, we characterize 232,735 and 304,014 inequivalent solutions, respectively. Furthermore, to verify practical playability, we implement a human-like solver that assesses the difficulty of the generated games. The analysis confirms that our $5\times5$ Sudoku games offer a balanced distribution of difficulty levels, ranging from Easy to Hard, making them a viable alternative to traditional $9 \times 9$ Sudoku.
\end{abstract}
\noindent\textbf{Keywords:} Sudoku game, perfect code, diameter perfect code, Lee distance, Latin square

\section{Introduction}
Sudoku~\cite{B07,D06} is a widely recognized logic-based game that was first designed by Howard Garns in 1979 under the name \textit{Number Place}, and published in Dell Magazines. The game was later introduced in Japan by the game company Nikoli in 1984, under the name \textit{Sudoku} which is an abbreviation of the Japanese phrase `s\={u}ji wa dokushin ni kagiru', meaning `the digits are limited to one occurrence'. With its elegant rules and broad appeal, Sudoku quickly gained popularity in Japan and was eventually reintroduced to Western audiences in the early 2000s, sparking a global phenomenon. Since then, Sudoku has achieved worldwide popularity and has now become a recreational pastime enjoyed by people of all ages across cultures.

Moreover, Sudoku has attracted significant attention within the field of mathematics, particularly in combinatorics. The game's inherent structural constraints make it an interesting object of theoretical investigation. These features have inspired a wide range of mathematical studies including the enumeration of valid Sudoku grids~\cite{FJ06}, the classification of Sudoku grids under various equivalence relations~\cite{RJ06}, and the analysis of the algorithmic complexity involved in solving Sudoku games~\cite{YS03}. In addition, many studies have investigated the connections between Sudoku and combinatorial structures such as graphs and Latin squares~\cite{BCC08, KSU23}.

Beyond the standard $9 \times 9$ Sudoku, numerous variants have been devised that alter the size of the grid or the structure of the subblocks. One example is the jigsaw Sudoku, also known as irregular Sudoku, where the grid is partitioned into nine irregularly shaped regions instead of regular $3 \times 3$ blocks. While such variants offer challenging new games, they often lack the rotational and reflective symmetries inherent in conventional Sudoku. Consequently, many Sudoku game solving strategies and theoretical tools developed for standard Sudoku become inapplicable in these cases.

To address this limitation, we aim to propose a generalized framework for constructing Sudoku-type games that preserve high levels of symmetry, analogous to those of the classical Sudoku. Our approach is based on the use of perfect codes with respect to the Lee distance, which naturally yield symmetric, well-structured partitioning of the grid. Beyond their fundamental role in coding theory, perfect codes are also deeply connected to various mathematical structures~\cite{E22}. For example, the structure of perfect codes often aligns with combinatorial designs including symmetric block designs and Steiner systems~\cite{CL75}. Additionally, they have intriguing connections to tiling problems~\cite{EV98, K23}, where optimal configurations under local constraints parallel the decoding spheres of codes. These interconnections have positioned perfect codes as a bridge between coding theory and diverse mathematical disciplines such as design theory, graph theory, and discrete geometry~\cite{CL75}.

In this paper, we further explore the connection between perfect codes and the structure of Sudoku games. We construct several variations of Sudoku games derived from specific classes of perfect codes under the Lee distance, and analyze the structural properties exhibited by each construction.
We provide a detailed analysis of two minimal cases: a $5 \times 5$ Sudoku constructed from a Lee distance perfect code in $\mathbb{Z}_5^2$, and an $8 \times 8$ Sudoku constructed from a Lee distance diameter perfect code in $\mathbb{Z}_8^2$. For each case, we enumerate all valid grids, classify them up to equivalence relations, and consider minimal Sudoku.

In addition, we analyze the difficulty of the Sudoku games constructed from perfect codes. By quantifying their solving complexity using a human-like solver, we demonstrate that these games offer diverse challenges. This analysis confirms that the proposed Sudoku-type games possess not only a rich mathematical structure but also practical value as engaging games. To further highlight this, playable versions of the constructed games have been made available at~\cite{KB25}.

This paper consists of eight sections. We review related works in Section 2. Section 3 introduces preliminaries from coding theory, and Section 4 presents the construction of Sudoku-type games from perfect and diameter perfect codes. In Section 5, we study equivalence relations for the resulting Sudoku grids. In Section 6, we enumerate possible Sudoku grids and the number of minimal Sudoku for small scale perfect and diameter perfect Sudoku grids. In Section 7, we assign difficulty levels to the constructed Sudoku games using a human-like solver. We conclude this paper in Section 8.

\section{Related Work}

The enumeration and classification of Sudoku solutions constitute fundamental analyses in the study of Sudoku as a combinatorial object. Felgenhauer and Jarvis~\cite{FJ06} performed the first complete enumeration of all valid $9 \times 9$ Sudoku solution grids, obtaining approximately $6.671 \times 10^{21}$ distinct solutions, and further classified them into $5{,}472{,}730{,}538$ essentially different solutions via Burnside's lemma under the natural symmetry group. McGuire, Tugemann, and Civario~\cite{MTC14} resolved the minimum number of clues problem by proving that exactly $17$ clues are required to uniquely determine a standard Sudoku solution, through an exhaustive search formulated as a hitting set enumeration. These two results---complete enumeration with equivalence classification and the determination of minimal puzzles---represent standard analytical frameworks that are naturally expected for any well-defined Sudoku-type puzzle. In this paper, we carry out the analogous analyses for the new Sudoku constructed from Lee distance perfect codes.

Assessing the difficulty of a newly constructed Sudoku-type game is essential for evaluating its viability as a playable game. Pel\'{a}nek~\cite{Pel14} proposed a difficulty rating model for standard Sudoku based on a hierarchy of human solving techniques, from basic methods such as Naked Single and Hidden Single to advanced strategies such as Naked Pair and X-Wing. In contrast, our human-like solver deliberately uses only the most elementary deductions---Naked Single and Hidden Single---combined with a trial-and-error phase, measuring difficulty simply by the total number of insertions. This minimal design captures the effort of a general solver relying solely on basic logic, without assuming familiarity with advanced techniques.

\section{Preliminaries on coding theory}
Coding theory traces its origins to Claude Shannon's pioneering paper, `A Mathematical Theory of Communication'~\cite{S48}, which laid the foundation for the mathematical study of reliable information transmission over noisy channels~\cite{HP10}. This groundbreaking insight initiated the study of error-correcting codes, leading to the development of numerous families of codes with various properties. We first review some essential mathematical notions from coding theory. These preliminaries provide the necessary foundation for the discussions that follow.

Let $\mathbb{Z}_n$ be the ring of integers modulo $n$. A \textit{code} $\mathcal{C}$  of length $m$ over $\mathbb{Z}_n$ is a subset of $\mathbb{Z}_n^m$, and its elements are called \textit{codewords}. If $\mathcal{C}$ is a submodule of $\mathbb{Z}_n^m$, then we call $\mathcal{C}$ a \textit{linear code}. A \textit{generator matrix} $G$ for a linear code $\mathcal{C}$ over $\mathbb{Z}_n$ is a matrix whose rows form a minimal spanning set for $\mathcal{C}$. If $n$ is a prime number $p$, then $\mathbb{Z}_p^m$ is a vector space over a finite field $\mathbb{F}_p$, and we can define the dimension of a linear code $\mathcal{C}$.

The \textit{Lee weight} of an element $\mathbf{u}=(u_1, u_2, \cdots, u_m)\in\mathbb{Z}_n^m$ is defined as
\[
\textup{wt}_L(\mathbf{u})=\sum_{i=1}^m\min\{u_i,\, n-u_i\}.
\]
The \textit{Lee distance} between two elements $\mathbf{u}$ and $\mathbf{v}$ in $\mathbb{Z}_n^m$ is given by
$
d_{L}(\mathbf{u}, \mathbf{v})=\textup{wt}_L(\mathbf{u}-\mathbf{v}).
$
Unless otherwise stated, all distances in this paper are Lee distances.

The \textit{minimum distance} $d$ of a code $\mathcal{C}$ over $\mathbb{Z}_n$ is the minimum of the Lee distances between any two distinct codewords in $\mathcal{C}$. A code $\mathcal{C}$ is said to be a \textit{$t$-error-correcting code} if $d\ge 2t+1$. The \textit{minimum weight} of $\mathcal{C}$ is the minimum of the Lee weights of all nonzero codewords in $\mathcal{C}$. In the case of linear codes, the minimum distance coincides with the minimum weight. We denote a code over $\mathbb{Z}_n$ of length $m$, size $M$, and minimum distance $d$ by an $(m, M, d)$ code over $\mathbb{Z}_n$.

Two codes $\mathcal{C}_1$ and $\mathcal{C}_2$ in $\mathbb{Z}_n^m$ are said to be \textit{equivalent} if there exists a permutation matrix $P$ such that
\[
\mathcal{C}_1P=\{\textbf{c}P~|~\mathbf{c}\in\mathcal{C}_1\}=\mathcal{C}_2.
\]
For a code $\mathcal{C}\subseteq\mathbb{Z}_n^m$, a \textit{translate of $\mathcal{C}$ by $\mathbf{x}\in\mathbb{Z}_n^m$} is defined as
\[
\mathbf{x}+\mathcal{C}=\{\mathbf{x}+\mathbf{c}~|~\mathbf{c}\in\mathcal{C}\}.
\]

For a codeword $\mathbf{c}$ in $\mathcal{C}$, we denote by $\mathcal{B}_t(\mathbf{c})$ the ball of radius $t$ centered at $\mathbf{c}$. A code $\mathcal{C}\subseteq\mathbb{Z}_n^m$ is called a \textit{perfect code} if every element of $\mathbb{Z}_n^m$ lies within a ball of radius $t=\lfloor\frac{d-1}{2}\rfloor$ centered at some codeword in $\mathcal{C}$, where $d$ is the minimum distance of the code $\mathcal{C}$. Since the balls of radius $t$ centered at distinct codewords are pairwise disjoint, a perfect code partitions the entire space $\mathbb{Z}_n^m$ into such balls.

Since the size of a ball depends only on its radius and not on its center, we denote by $\mathcal{B}_t$ the ball of radius $t$ in $\mathbb{Z}_n^m$. The following is a classical result, commonly referred to as the \textit{Sphere-Packing Bound}.

\begin{theorem}[Sphere-Packing Bound]
Let $\mathcal{C}\subseteq \mathbb{Z}_n^m$ be a code with minimum distance $d$, and let $t=\lfloor\frac{d-1}{2}\rfloor$. Then
\[
|\mathcal{C}|\cdot|\mathcal{B}_t|\le |\mathbb{Z}_n^m|.
\]
\end{theorem}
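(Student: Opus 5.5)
The plan is a standard disjointness-and-counting argument: show that the radius-$t$ balls around distinct codewords are pairwise disjoint, then count the elements of their union.

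\textbf{Step 1 (disjointness).} Let $\mathbf{c}_1\neq\mathbf{c}_2$ be codewords of $\mathcal{C}$, and suppose some $\mathbf{x}\in\mathcal{B}_t(\mathbf{c}_1)\cap\mathcal{B}_t(\mathbf{c}_2)$. We need the triangle inequality for the Lee distance. It follows coordinatewise, since $\min\{u,n-u\}$ is the circular distance from $u$ to $0$ on the cycle $\mathbb{Z}_n$. This circular distance satisfies $|a+b|_L\le |a|_L+|b|_L$ because moving along the cycle by $a$ and then by $b$ costs at most the sum of the two lengths. Summing over the coordinates gives $\textup{wt}_L(\mathbf{u}+\mathbf{v})\le \textup{wt}_L(\mathbf{u})+\textup{wt}_L(\mathbf{v})$. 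Hence
\[
d\le d_L(\mathbf{c}_1,\mathbf{c}_2)\le d_L(\mathbf{c}_1,\mathbf{x})+d_L(\mathbf{x},\mathbf{c}_2)\le 2t\le d-1,
\]
using $t=\lfloor (d-1)/2\rfloor$. This is a contradiction, so the balls are pairwise disjoint.

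\textbf{Step 2 (ball size is independent of the center).} Translation $\mathbf{y}\mapsto\mathbf{y}+\mathbf{c}$ preserves Lee distance, because $d_L(\mathbf{u},\mathbf{v})$ depends only on $\mathbf{u}-\mathbf{v}$. It therefore maps $\mathcal{B}_t(\mathbf{0})$ bijectively onto $\mathcal{B}_t(\mathbf{c})$, so every ball of radius $t$ has cardinality $|\mathcal{B}_t|$. This justifies the notation used in the statement.

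\textbf{Step 3 (counting).} The disjoint union $\bigsqcup_{\mathbf{c}\in\mathcal{C}}\mathcal{B}_t(\mathbf{c})$ is a subset of $\mathbb{Z}_n^m$. Its cardinality is therefore $|\mathcal{C}|\cdot|\mathcal{B}_t|\le n^m=|\mathbb{Z}_n^m|$.

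The only step that needs care is verifying the triangle inequality for the Lee weight on $\mathbb{Z}_n$. One route is to note that $|a|_L$ is the minimum of $|\tilde a|$ over integer lifts $\tilde a$ of $a$. Choosing lifts $\tilde a,\tilde b$ that realize $|a|_L$ and $|b|_L$, the integer $\tilde a+\tilde b$ is a lift of $a+b$, so $|a+b|_L\le|\tilde a+\tilde b|\le|\tilde a|+|\tilde b|=|a|_L+|b|_L$. The rest of the argument is routine.
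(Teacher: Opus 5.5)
Your proof is correct. The paper gives no proof of this bound and quotes it as a classical result, and your disjointness-plus-counting argument is the standard one that the paper tacitly relies on when it says that radius-$t$ balls around distinct codewords are pairwise disjoint. Your lifting argument for the triangle inequality of the Lee weight settles the only nontrivial step; it also implicitly uses the symmetry $\textup{wt}_L(-\mathbf{u})=\textup{wt}_L(\mathbf{u})$, which is immediate from $\min\{u,n-u\}=\min\{n-u,u\}$.
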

A code that attains equality in this bound is also called a perfect code.

To consider a generalization of perfect codes, we introduce the notion of an \textit{anticode}. An anticode $\mathcal{A}\subseteq \mathbb{Z}_n^m$ with diameter $D$ is a subset of $\mathbb{Z}_n^m$ in which the maximum of the distances between any two elements of $\mathcal{A}$ is $D$. This notion leads to a generalization of the Sphere-Packing Bound, known as the \textit{Code-Anticode Bound}~\cite{D73}.
\begin{theorem}[Code-Anticode Bound]\label{cabound}
	Let $\mathcal{C}\subseteq\mathbb{Z}_n^m$ be a code with minimum distance $d$, and let $\mathcal{A}$ be an anticode with diameter $D=d-1$. Then
	\[
	|\mathcal{C}|\cdot |\mathcal{A}|\le |\mathbb{Z}_n^m|.
	\]
\end{theorem}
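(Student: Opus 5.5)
The plan is a double-counting argument based on translates of the anticode. Since every element of $\mathcal{A}$ is within distance $D=d-1$ of every other, the key observation is this: for any $\mathbf{x}\in\mathbb{Z}_n^m$, the translate $\mathbf{x}+\mathcal{A}$ contains at most one codeword of $\mathcal{C}$. To see this, suppose $\mathbf{x}+\mathbf{a}_1$ and $\mathbf{x}+\mathbf{a}_2$ are distinct codewords. Lee distance is translation invariant, because it is defined as $\textup{wt}_L(\mathbf{u}-\mathbf{v})$. Hence
\[
d_L(\mathbf{x}+\mathbf{a}_1,\mathbf{x}+\mathbf{a}_2)=d_L(\mathbf{a}_1,\mathbf{a}_2)\le D=d-1<d,
\]
which contradicts the minimum distance of $\mathcal{C}$.

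Next I would count the set of pairs
\[
S=\{(\mathbf{x},\mathbf{a})\in\mathbb{Z}_n^m\times\mathcal{A} : \mathbf{x}+\mathbf{a}\in\mathcal{C}\}
\]
in two ways.

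\textbf{Counting by $(\mathbf{c},\mathbf{a})$.} For each fixed $\mathbf{a}\in\mathcal{A}$ and each $\mathbf{c}\in\mathcal{C}$, there is exactly one $\mathbf{x}$ with $\mathbf{x}+\mathbf{a}=\mathbf{c}$, namely $\mathbf{x}=\mathbf{c}-\mathbf{a}$. Therefore $|S|=|\mathcal{C}|\cdot|\mathcal{A}|$.

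\textbf{Counting by $\mathbf{x}$.} For fixed $\mathbf{x}$, the map $\mathbf{a}\mapsto\mathbf{x}+\mathbf{a}$ is injective, so the number of admissible $\mathbf{a}$ equals $|(\mathbf{x}+\mathcal{A})\cap\mathcal{C}|$. By the key observation this is at most $1$. Summing over all $\mathbf{x}$ gives $|S|\le|\mathbb{Z}_n^m|$.

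Combining the two counts yields $|\mathcal{C}|\cdot|\mathcal{A}|\le|\mathbb{Z}_n^m|$, as claimed.

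There is no serious obstacle. The only point needing care is the key observation, which rests entirely on the translation invariance of $d_L$ and on the diameter condition $D=d-1$. If the diameter were allowed to be $d$ or larger, two codewords could fit inside a single translate and the argument would fail. I would remark that equality holds exactly when the translates $\mathbf{c}+\mathcal{A}$, $\mathbf{c}\in\mathcal{C}$, tile $\mathbb{Z}_n^m$. This is the diameter-perfect notion used later for the Sudoku constructions.
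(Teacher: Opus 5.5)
Your proof is correct. The paper does not prove this statement itself; it quotes the bound from Delsarte [D73], where it is obtained within the theory of association schemes, as a clique--coclique type inequality between a set avoiding small distances and a set using only small distances.

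Your route is the elementary one that the Lee space permits. Since $d_L(\mathbf{u},\mathbf{v})=\textup{wt}_L(\mathbf{u}-\mathbf{v})$ is invariant under every translation of $\mathbb{Z}_n^m$, no translate $\mathbf{x}+\mathcal{A}$ can contain two codewords. A double count over the pairs $(\mathbf{x},\mathbf{a})$ then finishes the argument.

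Each approach has its advantage:
\begin{itemize}
\item Your argument is self-contained and needs only a transitive group of isometries acting on the space (here, the translations).
\item Delsarte's machinery is heavier, but it also covers spaces without such a group, e.g.\ distance-regular graphs that are not vertex-transitive.
\end{itemize}

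Your version also exhibits the equality case directly: equality holds iff every translate of $\mathcal{A}$ contains exactly one codeword. This is essentially the content of Theorem~\ref{dperfectcode}.

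You could shorten it slightly. The same distance computation shows that the sets $\mathbf{c}+\mathcal{A}$, $\mathbf{c}\in\mathcal{C}$, are pairwise disjoint. Hence $|\mathcal{C}|\,|\mathcal{A}|\le|\mathbb{Z}_n^m|$ follows from a plain packing argument, and your closing remark about tiling becomes immediate.
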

A code $\mathcal{C}$ that attains equality in this bound is called a \textit{$D$-diameter perfect code}.

For an adjacent pair of points $\{p_1, p_2\}$ in $\mathbb{Z}_n^m$, and for a positive integer $t$ with $2t+1\le n$, we define $\mathcal{A}_{2t+1}$ to be the set of all points in $\mathbb{Z}_n^m$ whose distance from $\{p_1, p_2\}$ is at most $t$. We call $\{p_1, p_2\}$ the \textit{core} of $\mathcal{A}_{2t+1}$.

\begin{theorem}{\rm(\cite{E22})}\label{anticodesize}
The set $\mathcal{A}_{2t+1}\subseteq\mathbb{Z}_n^m$ is an anticode of maximum size with diameter $2t+1$ where $2t+1\le n$, and the ball $\mathcal{B}_t$ is an anticode of maximum size with diameter $2t$ where $2t\le n$. In particular, their sizes are given as follows:
\begin{enumerate}
\item[(i)] $\displaystyle |\mathcal{A}_{2t+1}|=\sum_{i=0}^{\min\{m-1, t\}}2^{i+1}{\binom{m-1}{i}}{\binom{t+1}{i+1}}$.
\item[(ii)] $\displaystyle |\mathcal{B}_t|=\sum_{i=0}^{\min\{m, t\}}2^{i}{\binom{m}{i}}{\binom{t}{i}}$.
\end{enumerate}
\end{theorem}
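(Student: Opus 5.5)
We split the statement into two parts: the two size formulas, which are direct counts, and the maximality claims, which are the substantive part. The condition $2t+1\le n$ (resp.\ $2t\le n$) is exactly what prevents wrap-around in $\mathbb{Z}_n$. Under it, a coordinate value $u$ with $|u|\le t$, read as an integer in $(-n/2,n/2]$, has Lee weight $|u|$. So both sets can be counted as if they lived in $\mathbb{Z}^m$ with the $\ell_1$ metric.

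For (ii), count the points $\mathbf{u}\in\mathbb{Z}^m$ with $\sum|u_j|\le t$ by their support. Choose a support of size $i$ in $\binom{m}{i}$ ways and a sign on each coordinate in the support in $2^i$ ways. The absolute values are then positive integers $a_1,\dots,a_i$ with $\sum a_j\le t$. Adding a slack variable and using stars and bars, there are $\binom{t}{i}$ such tuples, which gives $|\mathcal{B}_t|=\sum_i 2^i\binom{m}{i}\binom{t}{i}$.

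For (i), translate so the core is $\{\mathbf{0},\mathbf{e}_1\}$. Then $\mathcal{A}_{2t+1}$ is the union of $\mathcal{B}_t(\mathbf{0})$ and $\mathcal{B}_t(\mathbf{e}_1)$. Equivalently, it is the set of $\mathbf{u}$ with $\mathrm{dist}(u_1,\{0,1\})+\sum_{j\ge2}|u_j|\le t$. Split by the support among coordinates $2,\dots,m$:
\begin{itemize}
\item choose $i$ of the last $m-1$ coordinates in $\binom{m-1}{i}$ ways and their signs in $2^i$ ways;
\item the first coordinate then contributes $2$ choices at each distance $s\ge0$ from $\{0,1\}$.
\end{itemize}
Summing over $s$, the count of the remaining tuples becomes $2\binom{t+1}{i+1}$. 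This follows by viewing $s$ together with the positive parts as a composition counted with one more slot, via $\sum_{s}\binom{t-s}{i}=\binom{t+1}{i+1}$. The diameter claims are immediate from the triangle inequality: two points of $\mathcal{B}_t$ are at distance at most $2t$, and two points of $\mathcal{A}_{2t+1}$ are at distance at most $t+1+t$. The hypothesis on $n$ guarantees that these diameters are attained and that the sets do not overlap themselves cyclically.

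The hard part is maximality: showing that any anticode of diameter $2t$ (resp.\ $2t+1$) has at most $|\mathcal{B}_t|$ (resp.\ $|\mathcal{A}_{2t+1}|$) points. This is the Lee-metric analogue of the Ahlswede–Khachatrian / Kleitman diameter theorem. My first approach would be compression (shifting). Define down-shift operators in each coordinate that move points toward $0$ when the target is free. One shows these preserve the diameter bound, so we may assume the anticode $\mathcal{A}$ is a down-set in the $\ell_1$ sense inside the box $[-\lfloor n/2\rfloor,\lfloor n/2\rfloor]^m$. Then argue by induction on $m$: split $\mathcal{A}$ into slices by the first coordinate, note that slices $k$ and $-k'$ are mutually constrained to have cross-diameter $D-k-k'$, and apply the induction hypothesis to unions of paired slices.

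If the compression step does not preserve the diameter because of the cyclic metric, I would fall back on citing~\cite{E22}, where this theorem appears, for the maximality. The counting parts would still be proved directly as above. I expect this maximality step, specifically handling the cyclic wrap-around in the compression, to be the main obstacle.
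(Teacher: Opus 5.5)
The paper gives no argument for this statement beyond the citation to~\cite{E22}.

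\textbf{Counting.} Your counts are correct as counts in $\mathbb{Z}^m$: support, signs and stars and bars for (ii), and the hockey-stick identity $\sum_{s=0}^{t}\binom{t-s}{i}=\binom{t+1}{i+1}$ for (i). The transfer to $\mathbb{Z}_n^m$, however, is off by one. Reducing the integer $\ell_1$-ball modulo $n$ is injective only when the $2t+1$ integers $-t,\dots,t$ are distinct mod $n$, i.e.\ $n\ge 2t+1$. For $\mathcal{A}_{2t+1}$ with core $\{\mathbf{0},\mathbf{e}_1\}$, the first coordinate runs over the $2t+2$ integers $-t,\dots,t+1$, so you need $n\ge 2t+2$. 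At the stated boundary the formulas actually fail:
\begin{itemize}
\item for $m=1$ and $n=2t$, $\mathcal{B}_t=\mathbb{Z}_{2t}$ has $2t$ elements, not $2t+1$;
\item for $m=1$ and $n=2t+1$, $\mathcal{A}_{2t+1}=\mathbb{Z}_{2t+1}$ has $2t+1$ elements, not $2t+2$.
\end{itemize}
So your counting proves (i) and (ii) under $n\ge 2t+2$, resp.\ $n\ge 2t+1$. State that as the hypothesis rather than asserting that the given conditions are ``exactly'' what prevents wrap-around. Your remark that the hypothesis makes the diameters attained also fails for $m=1$: with $n=2t+2$, $\mathcal{A}_{2t+1}$ is all of $\mathbb{Z}_{2t+2}$, whose diameter is $t+1$.

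\textbf{Maximality.} This step is not merely the hard part. Under the stated hypotheses it is false, and the obstruction is competing anticodes that wrap around the torus, not the compression of your candidate sets. Two examples:
\begin{itemize}
\item With $t=1$, $n=5$ (so even $n\ge 2t+2$ holds), $\mathbb{Z}_5\times\{0,1\}\subseteq\mathbb{Z}_5^2$ has Lee diameter $2+1=3$ and $10$ points, while $|\mathcal{A}_3|=8$.
\item With $t=2$, $n=5$, the whole of $\mathbb{Z}_5^2$ has diameter $4$ and $25>|\mathcal{B}_2|=13$ points.
\end{itemize}
So neither a compression argument nor a citation can give maximality in this generality. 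The diameter theorem you are invoking is a statement about $\mathbb{Z}^m$ with the $\ell_1$ metric, and it transfers to $\mathbb{Z}_n^m$ only when $n$ is large compared with $t$.

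\textbf{What the paper needs.} The paper only uses $m=2$ with $n=2t^2+2t+1$ (for $\mathcal{B}_t$) and $n=2(t+1)^2$ (for $\mathcal{A}_{2t+1}$). There maximality is immediate from Theorem~\ref{cabound}. The codes $\mathcal{C}$ of Theorems~\ref{pcode} and~\ref{leeanticode} have $n$ codewords and minimum distance $2t+1$, resp.\ $2t+2$. Hence every anticode $\mathcal{A}'$ of diameter $2t$, resp.\ $2t+1$, satisfies $|\mathcal{A}'|\le n^2/|\mathcal{C}|=n$. By your counts, $n$ is exactly $|\mathcal{B}_t|$, resp.\ $|\mathcal{A}_{2t+1}|$. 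This, together with your counting, is a complete proof of the cases the paper actually needs, with no compression at all.
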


Let $\mathcal{C}$ be a code with minimum distance $d$. If $\mathcal{A}$ is an anticode of diameter $d-1$, then each translate of $\mathcal{A}$ contains at most one codeword of $\mathcal{C}$. We also have the following theorem.
\begin{theorem}{\rm(\cite{E22})}\label{dperfectcode}
Let $\mathcal{C}\subseteq\mathbb{Z}_n^m$ be a code with minimum distance $d$ and let $\mathcal{A}$ be a corresponding anticode with diameter $D=d-1$. Then the code $\mathcal{C}$ is a $D$-diameter perfect code and $\mathcal{A}$ is a maximum size anticode if and only if each translate of $\mathcal{A}$ contains exactly one codeword of $\mathcal{C}$.
\end{theorem}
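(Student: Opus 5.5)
We prove the two directions separately, using only the Code-Anticode Bound (Theorem~\ref{cabound}) and a double-counting argument over all translates $\mathbf{x}+\mathcal{A}$, $\mathbf{x}\in\mathbb{Z}_n^m$.

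\textbf{Key count.} Let $N=|\mathbb{Z}_n^m|=n^m$. Count the pairs $(\mathbf{x},\mathbf{c})$ with $\mathbf{c}\in\mathcal{C}$ and $\mathbf{c}\in\mathbf{x}+\mathcal{A}$. For a fixed codeword $\mathbf{c}$, the condition $\mathbf{c}\in\mathbf{x}+\mathcal{A}$ means $\mathbf{x}\in\mathbf{c}-\mathcal{A}$. This set has exactly $|\mathcal{A}|$ elements, because $\mathbf{a}\mapsto\mathbf{c}-\mathbf{a}$ is injective. So the number of pairs is $|\mathcal{C}|\cdot|\mathcal{A}|$. Counting by $\mathbf{x}$ instead gives
\[
\sum_{\mathbf{x}\in\mathbb{Z}_n^m}|(\mathbf{x}+\mathcal{A})\cap\mathcal{C}|=|\mathcal{C}|\cdot|\mathcal{A}|.
\]
Each translate $\mathbf{x}+\mathcal{A}$ has diameter $D=d-1$, since Lee distance is translation invariant. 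Hence each translate contains at most one codeword, as noted before the statement, and every summand is $0$ or $1$.

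\textbf{($\Leftarrow$)} Suppose every translate contains exactly one codeword. Then every summand equals $1$, so $|\mathcal{C}|\cdot|\mathcal{A}|=N$. This is equality in Theorem~\ref{cabound}, so $\mathcal{C}$ is a $D$-diameter perfect code. For maximality, let $\mathcal{A}'$ be any anticode of diameter $D$. Theorem~\ref{cabound} applied to $\mathcal{A}'$ gives $|\mathcal{C}|\cdot|\mathcal{A}'|\le N=|\mathcal{C}|\cdot|\mathcal{A}|$, hence $|\mathcal{A}'|\le|\mathcal{A}|$. So $\mathcal{A}$ has maximum size.

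\textbf{($\Rightarrow$)} Suppose $\mathcal{C}$ is $D$-diameter perfect and $\mathcal{A}$ is a maximum size anticode. The step needing care is the meaning of "$D$-diameter perfect": it gives equality $|\mathcal{C}|\cdot|\mathcal{A}_0|=N$ for some anticode $\mathcal{A}_0$ of diameter $D$. Theorem~\ref{cabound} then gives $|\mathcal{C}|\cdot|\mathcal{A}'|\le N$ for every diameter-$D$ anticode $\mathcal{A}'$, so $\mathcal{A}_0$ is already of maximum size. Since $\mathcal{A}$ also has maximum size, $|\mathcal{A}|=|\mathcal{A}_0|$ and therefore $|\mathcal{C}|\cdot|\mathcal{A}|=N$.

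The displayed sum therefore equals $N$. It has $N$ terms, each at most $1$, so every term equals $1$. That is, every translate of $\mathcal{A}$ contains exactly one codeword.
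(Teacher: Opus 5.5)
Your proof is correct. The paper itself gives no proof; it quotes the result from \cite{E22}. Your double count $\sum_{\mathbf{x}}|(\mathbf{x}+\mathcal{A})\cap\mathcal{C}|=|\mathcal{C}|\,|\mathcal{A}|$, with each summand at most $1$ because a translate of a diameter-$(d-1)$ set cannot contain two codewords, is the standard argument behind this result. You also handled correctly, in the $(\Rightarrow)$ direction, that diameter perfection only asserts equality for \emph{some} diameter-$D$ anticode.

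One small remark: this same count already proves Theorem~\ref{cabound} in $\mathbb{Z}_n^m$, and it does so for every set of diameter at most $D$. So you could use it in place of your appeals to that bound, which makes the proof fully self-contained and gives maximality of $\mathcal{A}$ even among sets of smaller diameter.
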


The following two theorems characterize perfect codes and diameter perfect codes of length $2$.

\begin{theorem}\label{pcode}{\rm(\cite{GW70})}
Let $t\ge 1$, and let $\mathcal{C}\subseteq\mathbb{Z}_n^2$ with $n=2t^2+2t+1$ be a linear code with generator matrix
\[
G=\begin{bmatrix} 1 & 2t+1\end{bmatrix}.
\]
Then $\mathcal{C}$ is a $t$-error-correcting perfect code. Moreover, every $t$-error-correcting perfect code $\mathcal{C}\subseteq\mathbb{Z}_n^2$ is a translate of $\mathcal{C}$ or a translate of a code equivalent to $\mathcal{C}$.
\end{theorem}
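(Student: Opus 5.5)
We prove the two parts of Theorem~\ref{pcode} separately: first that $\mathcal{C}$ is perfect, then that every perfect code arises from it up to translation and equivalence.

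\textbf{Step 1: the code is perfect.} Write $n=2t^2+2t+1$, so the code is $\mathcal{C}=\{(k,(2t+1)k) : k\in\mathbb{Z}_n\}$. Since its first coordinate runs over all of $\mathbb{Z}_n$, it has $|\mathcal{C}|=n$.

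By Theorem~\ref{anticodesize}(ii) with $m=2$, the ball has size
\[
|\mathcal{B}_t|=1+4t+4\binom{t}{2}=2t^2+2t+1=n .
\]
Hence $|\mathcal{C}|\cdot|\mathcal{B}_t|=n^2=|\mathbb{Z}_n^2|$. By the Sphere-Packing Bound, it is enough to show that the minimum distance is at least $2t+1$. For a linear code this is the same as showing that no nonzero codeword has Lee weight at most $2t$.

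Equivalently, we show that the map
\[
\phi(x,y)=(2t+1)x-y \pmod n
\]
is injective on $\mathcal{B}_t(\mathbf 0)$. Note that $\mathcal{C}=\ker\phi$. So if $\phi$ is injective on the ball, distinct points of any ball $\mathbf{u}+\mathcal{B}_t$ lie in distinct cosets of $\mathcal{C}$. Then balls around distinct codewords are disjoint, and the count above shows they cover $\mathbb{Z}_n^2$.

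Injectivity can be checked directly. Since $\mathcal{B}_t$ is symmetric, it suffices to show that $\phi(x,y)\not\equiv 0$ for nonzero $(x,y)$ with $|x|+|y|\le 2t$. Suppose $(2t+1)x\equiv y\pmod n$ with $|x|+|y|\le 2t$ and $(x,y)\neq(0,0)$. Then
\[
|(2t+1)x-y|\le (2t+1)|x|+|y|\le (2t+1)\cdot 2t .
\]
A multiple $jn$ with $1\le j\le 2$ must therefore be handled case by case on the value of $|x|\le 2t$:
\begin{itemize}
\item If $x=0$, then $0<|y|\le 2t<n$, which is impossible.
\item If $x\ne0$, write $(2t+1)x-y=jn$ and compare residues. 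Choosing $x$ with $|x|\le t$ gives $|(2t+1)x-y|\le (2t+1)t+t<n$, so $j=0$ and $y=(2t+1)x$, which has $|y|>2t$, a contradiction.
\item If $t<|x|\le 2t$, then $|y|<t$. Using $(2t+1)(t+1)=n+t$, this reduces to the previous case after subtracting $n$, again giving a contradiction.
\end{itemize}
This small number-theoretic case check is routine.

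\textbf{Step 2: uniqueness.} Let $\mathcal{D}$ be any $t$-perfect code in $\mathbb{Z}_n^2$. Translate it so that $\mathbf 0\in\mathcal{D}$. The balls $\mathcal{B}_t(\mathbf d)$, $\mathbf{d}\in\mathcal{D}$, tile the torus.

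The plan is the classical Golomb--Welch argument: the Lee sphere in $\mathbb{Z}^2$ (a diamond) admits essentially only lattice tilings. Consider the ball $\mathcal{B}_t(\mathbf 0)$ and the point $(t+1,0)$ just outside its right corner. This point must be covered by some ball. Since the tiles are diamonds that must fit flush along the staircase boundary, the covering ball's center is forced to be either $(t+1,t)$ or $(t,-(t+1))$, up to the analogous choice, i.e.\ the center lies at $\pm$ the shift $(t,t+1)$ or $(t+1,-t)$.

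Propagating this forcing around all four sides shows the neighbouring centers are $\pm\mathbf a,\pm\mathbf b$ with $\{\mathbf a,\mathbf b\}$ one of two mirror-image configurations. It also shows that this choice is the same at every codeword: adjacent tiles must share the same slant, otherwise a gap of one cell appears. Consequently $\mathcal{D}$ is the lattice generated by $\mathbf a,\mathbf b$ modulo $n$.

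One configuration is the code generated by $(1,2t+1)$, since $(t,t+1)$ is a codeword: $t\cdot(1,2t+1)=(t,2t^2+t)\equiv(t,-(t+1))$. The other is its image under the coordinate swap or reflection, which is an equivalent code.

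\textbf{Main obstacle.} I expect the geometric forcing argument to be the hard part: showing that a tile adjacent to a corner is pinned down, and that the slant choice propagates consistently on the torus. I would handle it by examining the cells $(t+1,0)$ and $(t,1)$, which are adjacent to the boundary of $\mathcal{B}_t(\mathbf 0)$, and showing that any covering center other than the two forced ones overlaps $\mathcal{B}_t(\mathbf 0)$ or leaves an uncoverable cell. Because the minimum distance is $2t+1\le n$ and $n$ is large relative to $t$, local diamond geometry in $\mathbb{Z}^2$ transfers to the torus without wrap-around issues.
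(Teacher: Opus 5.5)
The paper gives no proof of this statement; it simply cites Golomb--Welch. Your argument therefore has to stand on its own.

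\textbf{Step~1 is essentially fine.} Three small repairs are needed.
\begin{itemize}
\item In the case $0<|x|\le t$, your bound $(2t+1)t+t$ tacitly assumes $|y|\le t$. Use $|y|\le 2t-|x|$ instead, which gives $(2t+1)|x|+|y|\le 2t|x|+2t\le 2t^2+2t<n$.
\item For $t<x\le 2t$, write $x=t+1+s$. Then $(2t+1)x-y\equiv t+(2t+1)s-y$, which lies strictly between $0$ and $n$.
\item Record that $t(1,2t+1)\equiv(t,-(t+1))$ has Lee weight $2t+1$. So $d=2t+1$ exactly, as the paper's definition of a $t$-error-correcting perfect code requires.
\end{itemize}

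\textbf{The genuine gap is Step~2.} It is a plan rather than a proof, and its one concrete assertion is false.
\begin{itemize}
\item The center $(t,-(t+1))$ is at Lee distance $t+2$ from $(t+1,0)$, so it cannot cover that cell.
\item Your two ``alternatives'' $(t+1,t)\equiv(t+1)(1,2t+1)$ and $(t,-(t+1))$ both lie in $\mathcal{C}$. So they do not separate the two mirror tilings; the vectors $\pm(t,t+1),\pm(t+1,-t)$ you then list belong to the other code.
\item More seriously, the centers that cover $(t+1,0)$ without meeting $\mathcal{B}_t(\mathbf 0)$ form the whole family $(t+1+s,\pm(t-s))$, $0\le s\le t$; for example $(2t+1,0)$ is one of them. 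Excluding $s\ge1$ is exactly the content of the uniqueness claim. ``The diamonds must fit flush'' does not do this, and the slant-consistency step is likewise only asserted.
\item Your reason for ignoring wrap-around is also wrong. For $t=1$ you have $n=5$, and points such as $(0,2t+1)$ already wrap. The right device is to pass to $\pi^{-1}(\mathcal{D})\subseteq\mathbb{Z}^2$ (with $\mathbf 0\in\mathcal{D}$). Since $2t<n$, this is an honest tiling of $\mathbb{Z}^2$ by $\ell_1$-diamonds of radius $t$, with centers pairwise at $\ell_1$-distance at least $2t+1$.
\end{itemize}

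\textbf{How the missing lemma can be proved.}

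First, the tile covering $(0,t+1)$ has center $(\pm t,t+1)$.
\begin{itemize}
\item Tightness of the triangle inequality forces that center to be $(\epsilon(t-s),t+1+s)$ with $0\le s\le t$.
\item Suppose $s\ge1$ and, by reflection, $\epsilon=1$, so $\mathbf c=(t-s,t+1+s)$.
\item The cell $(-1,t)$ is at distance $t+1$ from $\mathbf 0$ and $t+2$ from $\mathbf c$, so neither covers it. Tightness puts its covering center at $(-1-q,2t-q)$ with $0\le q\le t$, and disjointness from $\mathbf c$ forces $q=t$, giving $\mathbf e=(-t-1,t)$.
\item The cell $(-1,t+1)$ is at distances $t+2$, $t+1$, $t+1$ from $\mathbf 0,\mathbf c,\mathbf e$. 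Any center within $t$ of it and at distance at least $2t+1$ from $\mathbf e$ has the form $(-1+u,2t+1-u)$ with $0\le u\le t$.
\item Such a center is at distance $|u+s-t-1|+|t-u-s|\le 2t-1$ from $\mathbf c$, a contradiction.
\end{itemize}

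Second, the slant is constant.
\begin{itemize}
\item By symmetry, the four neighbours of each tile are $(\alpha t,t+1)$, $(t+1,-\beta t)$, $(-\gamma t,-(t+1))$ and $(-(t+1),\delta t)$, with $\alpha,\beta,\gamma,\delta\in\{\pm1\}$.
\item Cyclically consecutive neighbours cannot be at distance $2$, which forces $\alpha=\beta=\gamma=\delta$.
\item The neighbour relation is symmetric, so this common sign is the same at every tile.
\end{itemize}

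Finally, the conclusion.
\begin{itemize}
\item $\pi^{-1}(\mathcal{D})\supseteq L$, where $L$ is generated by $(t+1,t)$ and $(t,-(t+1))$, or by their coordinate swaps.
\item Since $[\mathbb{Z}^2:L]=n$, you get $n\mathbb{Z}^2\subseteq L$ and $|\pi(L)|=n=|\mathcal{D}|$.
\item Because $(t+1,t)-(t,-(t+1))=(1,2t+1)$, $\pi(L)$ is $\mathcal{C}$ or its coordinate swap, as claimed.
\end{itemize}
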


\begin{theorem}{\rm{(\cite{E22})}}\label{leeanticode}
For each $0\le i\le t$, the following matrix
\[
G_i=\begin{bmatrix}
    t+1+i & t+1-i\\i & 2(t+1)-i
\end{bmatrix}
\]
generates a $(2t+1)$-diameter perfect code $\mathcal{C}\subseteq\mathbb{Z}_n^2$, where $n=2(t+1)^2$, with minimum distance $2t+2$ and an anticode $\mathcal{A}_{2t+1}$. In particular, for each $1\le i\le t$, $G_0$ and $G_i$ are not equivalent to each other.
\end{theorem}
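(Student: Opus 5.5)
The plan is to work with lattices in $\mathbb{Z}^2$ rather than directly in $\mathbb{Z}_n^2$. Let $L_i\subseteq\mathbb{Z}^2$ be the lattice spanned by the rows $\mathbf{v}_1=(t+1+i,\,t+1-i)$ and $\mathbf{v}_2=(i,\,2(t+1)-i)$ of $G_i$.

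\emph{Step 1: determinant and size of the code.} A direct expansion gives
\[
\det G_i=(t+1+i)(2t+2-i)-i(t+1-i)=2(t+1)^2=n.
\]
Since $\operatorname{adj}(G_i)\,G_i=nI$, both $n\mathbf{e}_1$ and $n\mathbf{e}_2$ are integer combinations of the rows, so $n\mathbb{Z}^2\subseteq L_i$. The code $\mathcal{C}$ generated by $G_i$ over $\mathbb{Z}_n$ is therefore $L_i/n\mathbb{Z}^2$. Its size is the index $[\mathbb{Z}^2:n\mathbb{Z}^2]/[\mathbb{Z}^2:L_i]=n^2/n=n$.

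\emph{Step 2: anticode size.} Theorem \ref{anticodesize}(i) with $m=2$ gives
\[
|\mathcal{A}_{2t+1}|=2(t+1)+4\binom{t+1}{2}=2(t+1)+2t(t+1)=2(t+1)^2=n.
\]
Hence $|\mathcal{C}|\cdot|\mathcal{A}_{2t+1}|=n^2=|\mathbb{Z}_n^2|$. Once the minimum distance is shown to be $2t+2$, $\mathcal{A}_{2t+1}$ has diameter $2t+1=d-1$ and equality holds in Theorem \ref{cabound}, so $\mathcal{C}$ is a $(2t+1)$-diameter perfect code.

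\emph{Step 3: minimum distance (the main step).} Because $\mathcal{C}$ is linear and $n\mathbb{Z}^2\subseteq L_i$, the Lee weight of a codeword equals the minimum $\ell_1$-norm over its lifts in $L_i$. So it suffices to show every nonzero vector of $L_i$ has $\ell_1$-norm at least $2t+2$.

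Replace the basis by $\mathbf{v}_1$ and $\mathbf{w}=\mathbf{v}_2-\mathbf{v}_1=(-(t+1),\,t+1)$. A general lattice vector is then
\[
(x,y)=a\mathbf{v}_1+b\mathbf{w}=\bigl((a-b)(t+1)+ai,\;(a+b)(t+1)-ai\bigr).
\]
We use $|x|+|y|\ge\max(|x+y|,|x-y|)$, where $x+y=2a(t+1)$.
\begin{itemize}
\item If $a\neq0$, then $|x|+|y|\ge|x+y|\ge 2t+2$.
\item If $a=0$ and $b\neq0$, the vector is $b\mathbf{w}$, with norm $2|b|(t+1)\ge 2t+2$.
\end{itemize}
Equality is attained by $\mathbf{v}_1$, so $d=2t+2$.

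Before relying on this, I would double-check that reduction modulo $n$ cannot produce a smaller Lee weight. It cannot: taking the minimizing representative in each coordinate just chooses a different lift, and every lift lies in $L_i$.

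\emph{Step 4: non-equivalence.} Both codes are linear, and for length $2$ equivalence means either equality or equality after swapping coordinates. For $G_0$, both rows $(t+1,t+1)$ and $(0,2t+2)$ have all coordinates divisible by $t+1$. Since $t+1\mid n$, every codeword of $\mathcal{C}_0$ in $\mathbb{Z}_n^2$ has both coordinates divisible by $t+1$. This property is preserved by swapping coordinates.

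For $1\le i\le t$, the codeword $\mathbf{v}_1$ has first coordinate $t+1+i$, which is not divisible by $t+1$, and this remains true modulo $n$ because $t+1\mid n$. Hence no coordinate permutation maps $\mathcal{C}_i$ to $\mathcal{C}_0$, which finishes the proof.
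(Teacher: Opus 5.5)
The paper gives no proof of this statement. It imports the result from \cite{E22}, so there is no internal argument to compare yours against, and your self-contained proof is correct.

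\begin{itemize}
\item \emph{Size.} $\det G_i=2(t+1)^2$ gives $n\mathbb{Z}^2\subseteq L_i$, hence $\mathcal{C}=L_i/n\mathbb{Z}^2$ and $|\mathcal{C}|=n$.
\item \emph{Lower bound on distance.} Every vector of $L_i$ has $x+y\in 2(t+1)\mathbb{Z}$, and those with $x+y=0$ are multiples of $(-(t+1),t+1)$. Since all lifts of a codeword lie in $L_i$, every nonzero codeword has Lee weight at least $2t+2$.
\item \emph{Attainment.} $\mathbf{v}_1$ attains $2t+2$. You should add that its reduction modulo $n$ is nonzero (clear from $0<t+1+i<n$), so the lower bound applies to it.
\item \emph{Diameter perfectness.} The equality $|\mathcal{C}|\cdot|\mathcal{A}_{2t+1}|=n^2$ in Theorem~\ref{cabound} is exactly the paper's definition of a $(2t+1)$-diameter perfect code.
\end{itemize}

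Your route uses only the size formula of Theorem~\ref{anticodesize} and the code--anticode bound. The property the paper actually relies on later is that each translate of $\mathcal{A}_{2t+1}$ contains exactly one codeword; this is used in the Proposition preceding the definition of $\mathcal{I}_{\mathcal{C},\mathcal{A}}$ and in the palette-grid construction. With your approach it follows from Theorem~\ref{dperfectcode} rather than needing a direct tiling check.

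Your divisibility-by-$(t+1)$ invariant for non-equivalence is stronger than the paper's permutation-only definition requires. It is also preserved by negating a coordinate, since $t+1\mid n$. Moreover, a translate of a linear code that is again linear coincides with a code of the same form. So $\mathcal{C}_0$ and $\mathcal{C}_i$ remain inequivalent under signed permutations plus translations, the notion of equivalence commonly used for Lee codes.
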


\section{Construction of Sudoku-type games from perfect codes}
A Sudoku game is a $9 \times 9$ grid, partially filled with digits from 1 to 9. The grid is further divided into nine non-overlapping $3 \times 3$ subgrids. The objective is to complete the grid by filling the remaining entries so that each digit from 1 to 9 appears exactly once in every row, column, and $3\times 3$ subgrid.

To formalize the structure of a Sudoku game, we express it in terms of a Latin square, which captures the row and column constraints inherent to the game. A \textit{Latin square} of order $n$ is an $n \times n$ array whose entries are filled with elements from a set of size $n$, such that each element appears exactly once in every row and column.

Two Latin squares of order $n$ are said to be \textit{orthogonal} to each other if the set of ordered pairs of corresponding entries includes each of the $n^2$ possible pairs exactly once. In this paper, we use the term \textit{orthogonal} more broadly to refer to any two $n \times n$ arrays, not necessarily Latin squares, with entries from a set of size $n$ such that the set of ordered pairs of their corresponding entries contains all $n^2$ possible pairs exactly once.

\begin{definition}
Let $n$ be a positive integer and $[n]=\{1, 2, \cdots, n\}$. An $n\times n$ array $\mathcal{I}$ with entries from $[n]$ is called a \textit{palette grid} if each element $k$ in $[n]$ appears exactly $n$ times in $\mathcal{I}$, that is,
\[
|\{(i, j) \in \mathbb{Z}_n^2 \mid \mathcal{I}_{ij} = k\}| = n\quad\mbox{for each }k\in[n].
\]
An $n \times n$ array $S_\mathcal{I}$ over the symbol set $[n]$ is called a \textit{Sudoku grid} with respect to $\mathcal{I}$ if the following conditions are satisfied:
\begin{enumerate}
\item[(i)] $S_\mathcal{I}$ is a Latin square of order $n$.
\item[(ii)] $S_\mathcal{I}$ is orthogonal to the palette grid $\mathcal{I}$.
\end{enumerate}
\end{definition}

To avoid confusion, a Sudoku game is referred to as Sudoku, and a fully completed array satisfying all constraints is referred to as a Sudoku grid.

For the standard $9 \times 9$ Sudoku, the palette grid $\mathcal{I}$ would look like:
{\small{
\begin{equation}\label{Sudoku}
\mathcal{I}  = \begin{bmatrix}
1 & 1 & 1 & 2 & 2 & 2 & 3 & 3 & 3 \\
1 & 1 & 1 & 2 & 2 & 2 & 3 & 3 & 3 \\
1 & 1 & 1 & 2 & 2 & 2 & 3 & 3 & 3 \\
4 & 4 & 4 & 5 & 5 & 5 & 6 & 6 & 6 \\
4 & 4 & 4 & 5 & 5 & 5 & 6 & 6 & 6 \\
4 & 4 & 4 & 5 & 5 & 5 & 6 & 6 & 6 \\
7 & 7 & 7 & 8 & 8 & 8 & 9 & 9 & 9 \\
7 & 7 & 7 & 8 & 8 & 8 & 9 & 9 & 9 \\
7 & 7 & 7 & 8 & 8 & 8 & 9 & 9 & 9
\end{bmatrix}.
\end{equation}
}}
\\
Condition (ii), that is $\mathcal{I}\perp S_\mathcal{I}$, enforces the constraint that within each $3\times 3$ subgrid of $S_\mathcal{I}$, every symbol $1$ to $9$ must appear exactly once. In this sense, the orthogonality to the palette grid $\mathcal{I}$ represents the subgrid condition of Sudoku.

From this perspective, constructing novel Sudoku-type games is equivalent to constructing suitable $n\times n$ palette grid $\mathcal{I}$. For example, the following $9\times 9$ array
{\small{
\[
\mathcal{I}_{j}=\begin{bmatrix}
1 & 1 & 1 & 1 & 2 & 2 & 3 & 3 & 3 \\
1 & 1 & 1 & 2 & 2 & 3 & 3 & 3 & 3 \\
1 & 4 & 2 & 2 & 5 & 3 & 6 & 3 & 6 \\
1 & 4 & 2 & 5 & 5 & 5 & 6 & 6 & 6 \\
4 & 4 & 2 & 2 & 5 & 5 & 5 & 5 & 6 \\
4 & 4 & 7 & 7 & 7 & 7 & 9 & 5 & 6 \\
4 & 4 & 7 & 8 & 7 & 7 & 9 & 6 & 6 \\
4 & 8 & 7 & 8 & 8 & 7 & 9 & 9 & 9 \\
8 & 8 & 8 & 8 & 8 & 9 & 9 & 9 & 9
\end{bmatrix}
\]
}}
defines the palette grid for the jigsaw Sudoku grids introduced in~\cite{L18}. Unlike the jigsaw Sudoku grid with respect to $\mathcal{I}_{j}$, the conventional Sudoku grid, whose palette grid is given in~(\ref{Sudoku}), exhibits a rich underlying symmetric structure. By constructing palette grids using other mathematical objects with high symmetry, one can generate new Sudoku-type games similar to those of conventional Sudoku. In particular, perfect codes under Lee distance can serve as a foundational mathematical object for generating symmetric palette grids of this kind.

\subsection{Sudoku from perfect codes}
Let $n=2t^2+2t+1$ for some positive integer $t$, and consider $\mathbb{Z}_n$, the ring of integers modulo $n$. Let $\mathcal{C}\subseteq\mathbb{Z}_n^2$ be a perfect code with minimum distance $d=2t+1$. Then the balls of radius $t=(d-1)/2$ centered at codewords of $\mathcal{C}$ are pairwise disjoint and satisfy
\[
\bigcup_{\mathbf{c}\in \mathcal{C}}\mathcal{B}_t(\mathbf{c})=\mathbb{Z}_n^2.
\]
Moreover, the size of each ball is $|\mathcal{B}_t(c)|=n$ because $4\cdot \frac{t(t+1)}{2}+1=n$. Throughout this subsection, we fix this value of $n$ as $2t^2 + 2t + 1$.

\begin{definition}
Let $\mathcal{C} = \{\mathbf{c}_1, \mathbf{c}_2, \dots, \mathbf{c}_n\} \subseteq \mathbb{Z}_n^2$ be a $t$-error-correcting perfect code. The palette grid $\mathcal{I}_\mathcal{C}$ with respect to $\mathcal{C}$ is defined as an $n \times n$ array over the symbol set $[n]$, where rows and columns are indexed by the elements of $\mathbb{Z}_n$ in increasing order $0, 1, \dots, n-1$, such that for each $1 \le i \le n$ and every $(x, y) \in \mathcal{B}_t(\mathbf{c}_i)$, the entry at position $(x, y)$ is set to be $i$. An $n\times n$ array $S$ over $[n]$ is called a \textit{perfect Sudoku grid} with respect to $\mathcal{C}$ if it is a Sudoku grid with respect to the palette grid $\mathcal{I}_\mathcal{C}$.
\end{definition}

\begin{example}\label{dpcodeex}
Let $t=1$ and $n=2t^2+2t+1=5$. Let $\mathcal{C}\subseteq\mathbb{Z}_5^2$ be a code with generator matrix $G=\begin{bmatrix}3 & 1\end{bmatrix}$, so that
\[
\mathcal{C}=\{(0,\, 0),\,(3,\,1),\,(1,\,2),\,(4,\,3),\,(2,\,4)\}.
\]
It forms a perfect code over $\mathbb{Z}_5$ with minimum distance $3$. The palette grid $\mathcal{I}_\mathcal{C}$ is given as follows:
\[
\mathcal{I}_\mathcal{C}=\begin{bmatrix}
    \mathbf{1} & 1 & 2 & 5 & 1\\1 & 2 & \mathbf{2} & 2 & 3\\3 & 4 & 2 & 3 & \mathbf{3}\\4 & \mathbf{4} & 4 & 5 & 3\\1 & 4 & 5 & \mathbf{5} & 5
\end{bmatrix},
\]
where the positions of boldfaces correspond to the codewords of $\mathcal{C}$. Let
\[
S=\begin{bmatrix}
    4 & 5 & 1 & 2 & 3\\1 & 2 & 3 & 4 & 5\\3 & 4 & 5 & 1 & 2\\5 & 1 & 2 & 3 & 4\\2 & 3 & 4 & 5 & 1
\end{bmatrix}
\]
be a Latin square of order $5$. Then $S$ is orthogonal to the palette grid $\mathcal{I}_\mathcal{C}$. Thus, $S$ is a perfect Sudoku grid with respect to $\mathcal{C}$.
\end{example}

\subsection{Sudoku from diameter perfect codes}
Let $\mathcal{C}\subseteq\mathbb{Z}_n^2$ be a code with minimum distance $d$. From Theorem \ref{anticodesize}, we know that the maximum size of an anticode in $\mathbb{Z}_n^2$ with diameter $2t+1<d$ and $t\ge 1$ is
\[
|\mathcal{A}_{2t+1}|=\sum_{i=0}^t2^{i+1}{\binom{1}{i}}{\binom{t+1}{i+1}}=2(t+1)^2.
\]
We now set $n=2(t+1)^2$, and consider a code $\mathcal{C}\subseteq\mathbb{Z}_n^2$. By the Code-Anticode Bound, we have $|\mathcal{C}|=n$. This value of $n=2(t+1)^2$ will be fixed throughout this subsection.

\begin{prop}
Let $\mathcal{C}_i\subseteq\mathbb{Z}_n^2$ where $0\le i\le t$ be a code with generator matrix
\[
G_i=\begin{bmatrix} t+1+i & t+1-i\\i & 2(t+1)-i\end{bmatrix}.
\]
Then, each translate of $\mathcal{C}_i$ is a ($2t+1$)-diameter perfect code with minimum distance $2t+2$ and an anticode $\mathcal{A}_{2t+1}$.
\end{prop}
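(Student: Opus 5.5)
The plan is to reduce everything to Theorem~\ref{leeanticode} and then show that the relevant properties are invariant under translation. By Theorem~\ref{leeanticode}, the linear code $\mathcal{C}_i$ generated by $G_i$ is already a $(2t+1)$-diameter perfect code in $\mathbb{Z}_n^2$, with $n=2(t+1)^2$, minimum distance $2t+2$ and anticode $\mathcal{A}_{2t+1}$. So it suffices to show that for any $\mathbf{x}\in\mathbb{Z}_n^2$ the translate $\mathbf{x}+\mathcal{C}_i$ inherits these properties.

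The key steps are as follows.
\begin{enumerate}
\item[(i)] Lee distance is translation invariant:
\[
d_L(\mathbf{x}+\mathbf{u},\mathbf{x}+\mathbf{v})=\textup{wt}_L(\mathbf{u}-\mathbf{v})=d_L(\mathbf{u},\mathbf{v}).
\]
Hence $\mathbf{x}+\mathcal{C}_i$ has the same minimum distance $2t+2$ as $\mathcal{C}_i$.
\item[(ii)] Translation is a bijection, so $|\mathbf{x}+\mathcal{C}_i|=|\mathcal{C}_i|$. By Theorem~\ref{anticodesize}(i) with $m=2$, $|\mathcal{A}_{2t+1}|=2(t+1)^2=n$. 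The perfectness of $\mathcal{C}_i$ means that equality holds in the Code-Anticode Bound (Theorem~\ref{cabound}), i.e.\ $|\mathcal{C}_i|\cdot|\mathcal{A}_{2t+1}|=n^2$. The same equality therefore holds for $\mathbf{x}+\mathcal{C}_i$, with anticode diameter $D=2t+1=d-1$.
\end{enumerate}
This already makes $\mathbf{x}+\mathcal{C}_i$ a $(2t+1)$-diameter perfect code by definition.

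Alternatively, or as a cross-check, one can use the characterization in Theorem~\ref{dperfectcode}. For any translate $\mathbf{y}+\mathcal{A}_{2t+1}$, we have $(\mathbf{y}+\mathcal{A}_{2t+1})\cap(\mathbf{x}+\mathcal{C}_i)=\mathbf{x}+\bigl((\mathbf{y}-\mathbf{x}+\mathcal{A}_{2t+1})\cap\mathcal{C}_i\bigr)$. The right-hand side has exactly one element, because $\mathcal{C}_i$ meets every translate of $\mathcal{A}_{2t+1}$ exactly once. Since $\mathcal{A}_{2t+1}$ is a maximum size anticode, Theorem~\ref{dperfectcode} gives the conclusion for $\mathbf{x}+\mathcal{C}_i$.

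There is no substantial obstacle once Theorem~\ref{leeanticode} is available; the only point needing care is the consistency check. If one wanted a self-contained argument not relying on Theorem~\ref{leeanticode}, the main work would be twofold:
\begin{enumerate}
\item[(a)] Show $|\mathcal{C}_i|=n$. Here $\det G_i=2(t+1)^2=n\equiv 0$, and one checks that the row module has index $n$ in $\mathbb{Z}_n^2$.
\item[(b)] Show that every nonzero codeword has Lee weight at least $2t+2$. Writing a codeword as $a\mathbf{g}_1+b\mathbf{g}_2$, this reduces to a lattice argument: the lattice spanned by $\mathbf{g}_1,\mathbf{g}_2$ and $n\mathbb{Z}^2$ has fundamental domain tiled by translates of $\mathcal{A}_{2t+1}$.
\end{enumerate}
Step (b) is the hard step; I would first try to verify directly that the translates $\mathcal{A}_{2t+1}+\mathbf{c}$, $\mathbf{c}\in\mathcal{C}_i$, are disjoint.
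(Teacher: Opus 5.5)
Your proposal is correct. Your ``cross-check'' paragraph is the paper's proof. The paper invokes Theorem~\ref{dperfectcode}, relying implicitly on Theorem~\ref{leeanticode} for the fact that every translate of $\mathcal{A}_{2t+1}$ contains exactly one codeword of $\mathcal{C}_i$. Your identity $(\mathbf{y}+\mathcal{A}_{2t+1})\cap(\mathbf{x}+\mathcal{C}_i)=\mathbf{x}+\bigl((\mathbf{y}-\mathbf{x}+\mathcal{A}_{2t+1})\cap\mathcal{C}_i\bigr)$ supplies the translation step that the paper leaves unstated.

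Your primary route avoids Theorem~\ref{dperfectcode}. Lee distance and cardinality are translation invariant, so the equality $|\mathcal{C}_i|\cdot|\mathcal{A}_{2t+1}|=n^2$ in the Code-Anticode Bound transfers directly to $\mathbf{x}+\mathcal{C}_i$ with the same $d-1=2t+1$. That equality is the definition of a $(2t+1)$-diameter perfect code. This route is slightly more elementary. The paper's route has the advantage of yielding directly the ``exactly one codeword in each translate of $\mathcal{A}_{2t+1}$'' property, which is what the palette grid $\mathcal{I}_{\mathcal{C},\mathcal{A}}$ actually uses.

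Your closing remarks (a)--(b) on a self-contained proof are not needed, since both you and the paper take Theorem~\ref{leeanticode} as given.
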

\begin{proof}
This follows directly from Theorem~\ref{dperfectcode}, which ensures that each translate of the anticode $\mathcal{A}_{2t+1}$ in $\mathbb{Z}_n^2$ contains exactly one codeword of $\mathcal{C}_i$.
\end{proof}

In this section, we assume that for each codeword $\mathbf{c}$ in a $t$-error-correcting diameter perfect code $\mathcal{C} \subseteq \mathbb{Z}_n^2$, the core of the translate of the anticode $\mathcal{A}_{2t+1}$ corresponding to $\mathbf{c}$ is given by the set $\{\mathbf{c}, \mathbf{c}+(1, 0)\}$.

\begin{definition}
    Let $\mathcal{C}=\{\mathbf{c}_1, \mathbf{c}_2, \ldots, \mathbf{c}_n\}\subseteq\mathbb{Z}_n^2$ where $n=2(t+1)^2$ be a $(2t+1)$-diameter perfect code $\mathcal{C}\subseteq\mathbb{Z}_n^2$ with minimum distance $2t+2$ and an anticode $\mathcal{A}_{2t+1}$. Define a palette grid $\mathcal{I}_{\mathcal{C}, \mathcal{A}}$ as an $n\times n$ array over the symbol set $[n]$, where rows and columns are indexed by the elements of $\mathbb{Z}_n$ in increasing order. Let $\mathcal{A}=\{\mathcal{A}_i\}_{i=1}^n$ be the set of translates of $\mathcal{A}_{2t+1}$ where $\mathcal{A}_i$ contains $\mathbf{c}_i$ for each $i$. For each $1\le i\le n$ and every $(x, y)\in\mathcal{A}_i$, set the entry at position $(x, y)$ of $\mathcal{I}_{\mathcal{C}, \mathcal{A}}$ to be $i$. We call an $n\times n$ array $S$ over $[n]$ a \textit{diameter perfect Sudoku grid} with respect to $\mathcal{C}$ and $\mathcal{A}$ if it is a Sudoku grid with respect to the palette grid $\mathcal{I}_{\mathcal{C}, \mathcal{A}}$.
\end{definition}

\begin{example}
    Let $t=1$ and $G_1$ be a generator matrix given in Theorem~\ref{leeanticode}. Then,
    \[
    G_1=\begin{bmatrix}
        3 & 1
    \end{bmatrix}
    \]
    generates a $3$-diameter perfect code $\mathcal{C}\subseteq\mathbb{Z}_8^2$ with minimum distance $4$, which is given by
    \[
    \mathcal{C}=\{(3, 1), (6, 2), (1, 3), (4, 4), (7, 5), (2, 6), (5, 7), (0, 0)\}.
    \]
    For $1\le i\le 8$, we define an anticode $\mathcal{A}_i$ whose core consists of $\{(3i, i), (3i+1, i)\}$, denoted in boldface. Let $\mathcal{A}=\{\mathcal{A}_i\}_{i=1}^8$. Then, we have a palette grid $\mathcal{I}_{\mathcal{C}, \mathcal{A}}$ as follows:
\[\mathcal{I}_{\mathcal{C}, \mathcal{A}}=
\begin{bmatrix}
    \mathbf{8} & 8 & 2 & 3 & 5 & \mathbf{5} & 5 & 8\\\mathbf{8} & 8 & 3 & \mathbf{3} & 3 & 5 & 6 & 8\\8 & 1 & 3 & \mathbf{3} & 3 & 6 & \mathbf{6} & 6\\1 & \mathbf{1} & 1 & 3 & 4 & 6 & \mathbf{6} & 6\\1 & \mathbf{1} & 1 & 4 & \mathbf{4} & 4 & 6 & 7\\7 & 1 & 2 & 4 & \mathbf{4} & 4 & 7& \mathbf{7}\\7 & 2 & \mathbf{2} & 2 & 4 & 5 & 7 & \mathbf{7}\\8 & 2 & \mathbf{2} & 2 & 5 & \mathbf{5} & 5 & 7
\end{bmatrix}.
\]
Let $S$ be a Latin square of order $8$, whose entries are as follows:
\[
S=\begin{bmatrix}
    2 & 4 & 6 & 1 & 5 & 3 & 8 & 7\\
    8 & 5 & 3 & 6 & 4 & 7 & 2 & 1\\
    6 & 1 & 2 & 5 & 7 & 8 & 4 & 3\\
    4 & 2 & 7 & 8 & 3 & 6 & 1 & 5\\
    5 & 3 & 8 & 4 & 2 & 1 & 7 & 6\\
    1 & 6 & 4 & 7 & 8 & 5 & 3 & 2\\
    7 & 8 & 1 & 3 & 6 & 2 & 5 & 4\\
    3 & 7 & 5 & 2 & 1 & 4 & 6 & 8\\
\end{bmatrix}.
\]
It can be verified that $S$ is orthogonal to $\mathcal{I}_{\mathcal{C}, \mathcal{A}}$ and hence $S$ forms a diameter perfect Sudoku grid with respect to $\mathcal{C}$ and $\mathcal{A}$.
\end{example}

\section{Equivalences of Sudoku grids from perfect codes}
Let $\mathcal{S}_\mathcal{C}$ be the set of all perfect Sudoku grids constructed from a $t$-error-correcting perfect code $\mathcal{C}\subseteq\mathbb{Z}_n^2$ and $\mathcal{S}_{\mathcal{C},\mathcal{A}}$ be the set of all diameter perfect Sudoku grids constructed from a ($2t+1$)-diameter perfect code $\mathcal{C}\subseteq \mathbb{Z}_n^2$ with minimum distance $2t+2$ and a corresponding anticode $\mathcal{A}_{2t+1}$. For simplicity, we denote both $\mathcal{S}_\mathcal{C}$ and $\mathcal{S}_{\mathcal{C},\mathcal{A}}$ by $\mathcal{S}$.

We define an equivalence relation $\sim_l$ on $\mathcal{S}$ by declaring $S_1\sim_l S_2$ for $S_1, S_2\in\mathcal{S}$ if there exists a bijection $\sigma:[n]\to[n]$ such that $\sigma((S_1)_{i, j})=(S_2)_{i, j}$ for every $i, j\in\mathbb{Z}_n$. In other words, two (diameter) perfect Sudoku grids are considered equivalent if one can be obtained from the other by applying a permutation of symbols in $[n]$ to each entry. We refer to this equivalence relation as \textit{relabeling}. It is straightforward to verify that $\sim_l$ is an equivalence relation on $\mathcal{S}$. Moreover, for each $S\in \mathcal{S}$, the equivalence class $[S]=\{S'\in \mathcal{S}|S'\sim_l S\}$ contains exactly $n!$ elements. We denote the set of equivalence classes of $\mathcal{S}$ up to relabeling by $\bar{\mathcal{S}}$.

Next, we define an equivalence relation on $\bar{\mathcal{S}}$ by considering the action of the group of rigid motions acting on $n \times n$ (diameter) perfect Sudoku grids. Specifically, we introduce three types of transformations that can be applied to a set $\mathcal{A}$ of $n\times n$ arrays.
\begin{enumerate}
\item[(i)] Rotation: Let $r:\mathcal{A}\to\mathcal{A}$ be the map that rotates an array $A\in\mathcal{A}$ by $90^\circ$ clockwise. Specifically, $r$ is given by
	\[(r(A))_{i,j}=A_{n-1-j, i}\]
	for all $0\le i, j\le n-1$.
\item[(ii)] Reflection: Let $s:\mathcal{A}\to\mathcal{A}$ be the map that reflects an array $A\in\mathcal{A}$ with respect to the vertical axis. The map $s$ is defined by
	\[(s(A))_{i, j}=A_{i, n-1-j}
	\]
	for all $0\le i, j\le n-1$.
\item[(iii)] Translation: Let $\tau_1, \tau_2:\mathcal{A}\to\mathcal{A}$ be the maps that translate an array $A\in\mathcal{A}$ downward by one row and rightward by one column, respectively. Specifically,
	\[(\tau_1(A))_{i, j}=A_{i-1, j}\quad\mbox{and}\quad (\tau_2(A))_{i,j}=A_{i, j-1}\]
	for all $0\le i, j\le n-1$, where the indices are taken modulo $n$.
\end{enumerate}
These transformations form the group $\mathcal{G}$ of rigid motions under composition. Then $\mathcal{G}$ defines an action on the set $\mathcal{A}$ of $n \times n$ arrays, where the action is given by $\varphi \cdot A = \varphi(A)$, and composition satisfies $\varphi_1\varphi_2(A)=\varphi_1(\varphi_2(A))$ for all $A \in \mathcal{A}$ and $\varphi,\, \varphi_1,\, \varphi_2\in \mathcal{G}$.

In order to define a well-defined group action of $\mathcal{G}$ on the set $\bar{\mathcal{S}}$ of relabeling equivalence classes, we must ensure that the action of each group element preserves the set $\mathcal{S}$ of (diameter) perfect Sudoku grids. That is, we seek a subgroup $\mathcal{G}_\mathcal{S} \leq \mathcal{G}$ such that
	$g \cdot S \in \mathcal{S} \quad \text{for all } S \in \mathcal{S} \text{ and } g \in \mathcal{G}_\mathcal{S}.$

\subsection{Sudoku from perfect codes}
\begin{theorem}\label{pSudoku}
Let $\mathcal{C}_0$ be a linear $t$-error-correcting perfect code with generator matrix
\[
G=[a~~b],
\]
and $\mathcal{C}=\mathbf{x}+\mathcal{C}_0\subseteq\mathbb{Z}_n^2$ for some $\mathbf{x}=(x_1, x_2)\in\mathbb{Z}_n^2$.
Define
	\[
	\mathcal{G}_S=\langle \tau_2^{x_1+x_2+1}\tau_1^{x_1-x_2}r,\, \tau_1^a\tau_2^b\rangle.
	\]
	Then, for every $S\in \mathcal{S}$ and $g\in \mathcal{G}_\mathcal{S}$, $g\cdot S\in \mathcal{S}$.
\end{theorem}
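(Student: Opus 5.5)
The approach is to view every element of $\mathcal{G}$ as acting on the set of positions $\mathbb{Z}_n^2$ rather than on arrays. If $g$ is induced by a bijection $\phi_g$ of $\mathbb{Z}_n^2$ via $(g(A))_{\phi_g(\mathbf{u})}=A_{\mathbf{u}}$, then $g(S)$ is again in $\mathcal{S}$ as soon as two things hold. First, $\phi_g$ sends rows and columns to rows and columns, which preserves the Latin property. Second, $\phi_g$ permutes the cells $\mathcal{B}_t(\mathbf{c})$, $\mathbf{c}\in\mathcal{C}$, of the palette grid $\mathcal{I}_\mathcal{C}$. Given the second property, each ball of $g(S)$ carries exactly the multiset of symbols of some ball of $S$, namely all of $[n]$, so orthogonality to $\mathcal{I}_\mathcal{C}$ is preserved. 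Since $\mathcal{S}$-preservation is closed under composition, it suffices to check these two properties for the two generators of $\mathcal{G}_\mathcal{S}$.

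\textbf{Step 1: position maps.} The rotation $r$ satisfies $(r(A))_{i,j}=A_{n-1-j,i}$, so it moves the entry at $(u,v)$ to $(v,-1-u)$. The translations $\tau_1,\tau_2$ add $(1,0)$ and $(0,1)$ to positions. Composing, $\tau_1^a\tau_2^b$ induces $\mathbf{u}\mapsto\mathbf{u}+(a,b)$. The first generator induces
\[
\rho(u,v)=(v+x_1-x_2,\; x_1+x_2-u).
\]
Both maps are compositions of translations with the linear map $(u,v)\mapsto(v,-u)$. Each of these is a Lee isometry of $\mathbb{Z}_n^2$ that sends rows and columns to rows and columns. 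An isometry maps $\mathcal{B}_t(\mathbf{c})$ onto $\mathcal{B}_t(\phi(\mathbf{c}))$, so the second property reduces to showing $\phi(\mathcal{C})=\mathcal{C}$.

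\textbf{Step 2: the translation generator.} Since $(a,b)\in\mathcal{C}_0$ and $\mathcal{C}_0$ is linear, $\mathcal{C}=\mathbf{x}+\mathcal{C}_0$ is invariant under adding $(a,b)$. This case is immediate.

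\textbf{Step 3: the rotation generator (main obstacle).} A codeword $\mathbf{x}+k(a,b)$ is sent by $\rho$ to $(x_1+kb,\,x_2-ka)=\mathbf{x}+k(b,-a)$. So it suffices to show $(b,-a)\in\mathcal{C}_0$, that is, that $\mathcal{C}_0$ is invariant under the quarter turn $(u,v)\mapsto(v,-u)$. This is the one place where the specific structure of perfect codes enters, and I will invoke Theorem~\ref{pcode}. Since $|\mathcal{C}_0|=n$, the generator $(a,b)$ has additive order $n$. By Theorem~\ref{pcode}, $\mathcal{C}_0$ equals the code generated by $[1~~2t+1]$ or by its coordinate swap $[2t+1~~1]$; a linear code that is a translate of a linear code equals that code.

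The key identity is $(2t+1)^2=4t^2+4t+1=2n-1\equiv-1\pmod n$. In the first case, $(2t+1)\cdot(1,2t+1)=(2t+1,-1)$, which is the rotated generator. In the second case, $-(2t+1)\cdot(2t+1,1)=(1,-(2t+1))$, again the rotated generator. Hence the rotated generator lies in $\mathcal{C}_0$ in both cases. Rotation is linear, so this gives $\mathcal{C}_0$ invariant under the quarter turn, hence $\rho(\mathcal{C})=\mathcal{C}$.

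If I want to avoid the classification, I would instead argue directly that the rotation of a Lee perfect code in $\mathbb{Z}_n^2$ is again perfect and linear, and that its generator must be a unit multiple of $(a,b)$. However, the route through Theorem~\ref{pcode} is the cleanest, so I will commit to it.
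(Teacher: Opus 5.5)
Your proposal is correct and follows essentially the same route as the paper: check the two generators, dispose of $\tau_1^a\tau_2^b$ by linearity of $\mathcal{C}_0$, and for the rotation generator reduce to $(b,-a)\in\mathcal{C}_0$ via Theorem~\ref{pcode} and the identity $(2t+1)^2\equiv -1 \pmod n$. The differences are minor. You track forward images $\mathbf{x}+k(b,-a)$, whereas the paper tracks preimages $\mathbf{x}+k(-b,a)$, which is immaterial. Your isometry framing also makes explicit the ball-to-ball step, which the paper leaves implicit when it only follows the codewords.
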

\begin{proof}
	We first consider $\tau_1^a\tau_2^b$. For $S\in \mathcal{S}$, $\tau_1^a\tau_2^b(S)$ is a Latin square of order $n$ since translation preserves the property that each row and column contains distinct symbols. Next, we show that $\tau_1^a \tau_2^b(S)$ is orthogonal to $\mathcal{I}_{\mathcal{C}}$. For each $\mathbf{c}\in \mathcal{C}$, note that $\mathbf{c}+(a, b)\in\mathcal{C}$. Then $\mathcal{B}_t(\mathbf{c}+(a, b))=\mathcal{B}_t(\mathbf{c}')$ for some $\mathbf{c}'\in\mathcal{C}$. By the definition of the palette grid $\mathcal{I}_{\mathcal{C}}$, it follows that the translated grid $\tau_1^a \tau_2^b(S)$ is orthogonal to $\mathcal{I}_{\mathcal{C}}$. Therefore, $\tau_1^a \tau_2^b(S)\in \mathcal{S}$.
	
	Next, we consider $\tau_2^{x_1 + x_2 + 1} \tau_1^{x_1 - x_2} r$. Since a rotation also preserves the Latin square property, it suffices to show that the resulting grid is orthogonal to $\mathcal{I}_{\mathcal{C}}$. Let $x'=x_1+x_2+1$. For each $\mathbf{c}=(c_1, c_2)\in\mathcal{C}$, $\mathbf{c}=\mathbf{x}+k(a, b)$ for some $k\in\mathbb{Z}_n$. Then we have
		\begin{align*}
			(\tau_2^{x'}(\tau_1^{x_1 - x_2} r(S)))_{c_1, c_2}&=(\tau_1^{x_1 - x_2} r(S))_{c_1, c_2-x'}\\&=(r(S))_{c_1-(x_1-x_2), c_2-x'}\\&=S_{x_1+x_2-c_2, c_1-x_1+x_2}\\&=S_{x_1-kb, x_2+ka}.
		\end{align*}
		Let $\mathbf{c}''=\mathbf{x}+k(-b, a)$. Consider that $(a, b)=h(1, 2t+1)P$ for some permutation matrix $P$ and some unit $h\in\mathbb{Z}_n$ by Theorem~\ref{pcode}. If $P=I$, then
			\[
			-(2t+1)\cdot(a, b)=-(2t+1)\cdot h(1, 2t+1)=h(-(2t+1), 1)=(-b, a),
			\]
		and if $P=\begin{bmatrix} 0 & 1\\1 & 0\end{bmatrix}$, then
		\[
		(2t+1)\cdot(a, b)=(2t+1)\cdot h(2t+1, 1)=h(-1, 2t+1)=(-b, a),
		\]
        in $\mathbb{Z}_n$ where $n=2t^2+2t+1$.
		Thus $\mathbf{c}''\in\mathcal{C}$, and $\tau_2^{x_1 + x_2 + 1} \tau_1^{x_1 - x_2} r(S)$ is orthogonal to $\mathcal{I}_{\mathcal{C}}$. Hence it belongs to $\mathcal{S}$. This completes the proof.
\end{proof}

The action of $\mathcal{G}_\mathcal{S}$ on a perfect Sudoku grid $S$ with respect to the code $\mathcal{C} = (2, 2) + \mathcal{C}_0$, where $\mathcal{C}_0$ is the code given in Example~\ref{dpcodeex}, is illustrated in Figure~\ref{fig:5x5_example} in which the original grid is mapped to two grids under the action of $\tau_1^3\tau_2$ and $r$.

\begin{figure}[ht]
\centering
\includegraphics[width=0.7\textwidth]{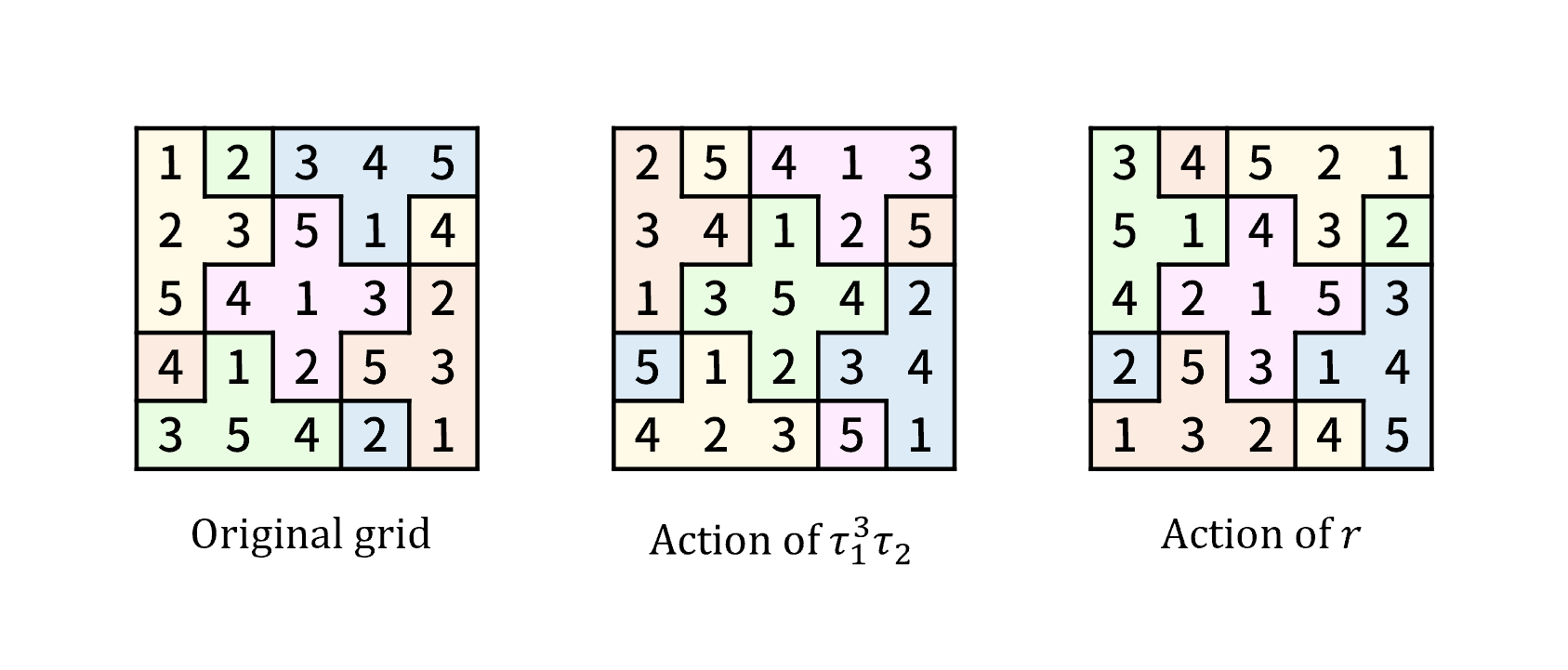}
\caption{Action of $\mathcal{G}_\mathcal{S}$ on a perfect Sudoku grid $S\in\mathcal{S}$ with respect to $\mathcal{C}$}
\label{fig:5x5_example}
\end{figure}

The action of $\mathcal{G}_\mathcal{S}$ on $\mathcal{S}$ naturally induces an action on the set $\bar{\mathcal{S}}$ of relabeling equivalence classes. This, in turn, defines an equivalence relation on $\bar{\mathcal{S}}$, where each equivalence class is an orbit $\mathcal{G}_\mathcal{S}\cdot\bar{S}$ for some $\bar{S}\in \bar{\mathcal{S}}$.

\begin{theorem}
Let $\mathcal{C}_0$ be a linear $t$-error-correcting perfect code with generator matrix
\[
G=[a~~b],
\]
and let $\mathcal{C}=\mathbf{x}+\mathcal{C}_0\subseteq\mathbb{Z}_n^2$ for some $\mathbf{x}=(x_1, x_2)\in\mathbb{Z}_n^2$ with $n=2t^2+2t+1$.
We define the group $\mathcal{G}_\mathcal{S}$ by
\[\mathcal{G}_\mathcal{S}=\langle \tau_2^{x_1+x_2+1}\tau_1^{x_1-x_2}r, \tau_1^a\tau_2^b\rangle.
\]
Then, each equivalence class in $\bar{\mathcal{S}}$ under the action of $\mathcal{G}_{\mathcal{S}}$ has a size which is a divisor of $4n$.
\end{theorem}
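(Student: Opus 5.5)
By the orbit–stabilizer theorem, every orbit $\mathcal{G}_\mathcal{S}\cdot\bar S$ has size $[\mathcal{G}_\mathcal{S}:\mathrm{Stab}(\bar S)]$, which divides $|\mathcal{G}_\mathcal{S}|$. The claim therefore reduces to showing that $|\mathcal{G}_\mathcal{S}|$ divides $4n$; in fact we aim to show $|\mathcal{G}_\mathcal{S}|=4n$, although a divisor of $4n$ is enough. Since $\mathcal{G}_\mathcal{S}$ acts on $\mathcal{S}$ by Theorem~\ref{pSudoku}, and this action commutes with relabeling, it descends to a well-defined action on $\bar{\mathcal{S}}$. This justifies applying orbit–stabilizer to $\bar{\mathcal{S}}$.

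To compute the group, model $\mathcal{G}$ as affine maps on the index set $\mathbb{Z}_n^2$. A grid $A$ is transformed by $\varphi(A)_{\mathbf p}=A_{f_\varphi(\mathbf p)}$, where $f_\varphi$ is a bijection of $\mathbb{Z}_n^2$. For $\tau_1^a\tau_2^b$, the map is $f(\mathbf p)=\mathbf p-(a,b)$. For $\rho:=\tau_2^{x_1+x_2+1}\tau_1^{x_1-x_2}r$, the computation in the proof of Theorem~\ref{pSudoku} gives
\[
f_\rho(p_1,p_2)=(x_1+x_2-p_2,\; p_1-x_1+x_2).
\]
Substituting $p_i=x_i+q_i$ shows $f_\rho(\mathbf x+\mathbf q)=\mathbf x+(-q_2,q_1)$. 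So $f_\rho$ is a rotation by $90^\circ$ about the point $\mathbf x$ in $\mathbb{Z}_n^2$, and hence $\rho^4=\mathrm{id}$. The translation $\tau=\tau_1^a\tau_2^b$ shifts by the generator $(a,b)$ of $\mathcal{C}_0$, which has order $n$ because $|\mathcal{C}_0|=n$. Therefore $\tau^n=\mathrm{id}$.

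The key relation comes from the proof of Theorem~\ref{pSudoku}. There it is shown that the $90^\circ$ linear rotation sends $(a,b)$ to $(-b,a)=u(a,b)$, where $u=\mp(2t+1)$ is a scalar. Hence conjugating $\tau$ by $\rho$ gives a power of $\tau$: $\rho\tau\rho^{-1}=\tau^{\pm u}$, with the sign fixed by the order of composition. Consequently $\langle\tau\rangle$ is normal in $\mathcal{G}_\mathcal{S}$, and every element can be written as $\tau^k\rho^j$ with $0\le k<n$ and $0\le j<4$. This gives $|\mathcal{G}_\mathcal{S}|\le 4n$.

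The main obstacle is making this bound exact, or at least showing the order divides $4n$, so that the group is a genuine semidirect product $\mathbb{Z}_n\rtimes\mathbb{Z}_4$. Because $n$ is odd, $\gcd(4,n)=1$. Also, $\langle\tau\rangle\cap\langle\rho\rangle$ is trivial: the nontrivial powers of $\rho$ are nontrivial linear rotations about $\mathbf x$, while $\tau^k$ are pure translations, and the two agree only at the identity. This holds as long as $\rho^2$, which is $\mathbf q\mapsto-\mathbf q$, is not a translation, and that is true since $n>2$. Both generators act faithfully on arrays, so $|\mathcal{G}_\mathcal{S}|=4n$ exactly. Orbit–stabilizer then gives that every equivalence class in $\bar{\mathcal{S}}$ has size dividing $4n$.
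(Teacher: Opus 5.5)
Your proof is correct and follows the same outline as the paper's: show $\varphi^4=\mathrm{id}$, show $\tau_1^a\tau_2^b$ has order $n$, conclude $|\mathcal{G}_\mathcal{S}|=4n$, and apply orbit--stabilizer.

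The difference is in how you justify the group order. The paper simply asserts $|\mathcal{G}_\mathcal{S}|=4n$ ``since $n$ is odd.'' Coprime orders only make $\langle\varphi\rangle\cap\langle\tau_1^a\tau_2^b\rangle$ trivial. They do not by themselves bound the order of the group the two elements generate: for instance, the $4$-cycle $(1\,2\,3\,4)$ and the $5$-cycle $(1\,2\,3\,4\,5)$ generate $S_5$.

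Your conjugation relation $\rho\tau\rho^{-1}\in\langle\tau\rangle$ is the missing normality step. It rests on $(-b,a)=u(a,b)$, which the paper does establish in the proof of Theorem~\ref{pSudoku}. It gives $\mathcal{G}_\mathcal{S}=\langle\tau\rangle\langle\rho\rangle$, so $|\mathcal{G}_\mathcal{S}|=4n/|\langle\tau\rangle\cap\langle\rho\rangle|$ divides $4n$ even before you check that the intersection is trivial.

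Two smaller gains from your approach:
\begin{itemize}
\item Viewing $f_\rho$ as a rotation about $\mathbf{x}$ shows that $\varphi$ has order exactly $4$, not merely dividing $4$.
\item Deriving $|\tau|=n$ from $|\mathcal{C}_0|=n$ avoids the paper's imprecise claim that $(a,b)$ equals $(1,2t+1)$ or $(2t+1,1)$. In general $(a,b)$ is only a unit multiple of $(1,2t+1)$ or of its coordinate swap.
\end{itemize}
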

\begin{proof}
Note that the rotation $r$ has order $4$ since applying it four times returns the grid to its original configuration, and the translations $\tau_1$ and $\tau_2$, which shift the grid cyclically in the row and column directions, respectively, each has order $n$. Since $(a, b)$ is either $(1,\, 2t+1)$ or $(2t+1,\, 1)$, it follows that the order of $\tau_1^a \tau_2^b$ is $n$.
	
	Let
	\[
	\varphi=\tau_2^{x_1+x_2+1}\tau_1^{x_1-x_2}r.
	\]
	For $A\in\mathcal{A}$ and $i, j\in\mathbb{Z}_n$,
	\begin{align*}
	(\varphi^4(A))_{i, j}&=(\varphi^2(\varphi^2(A)))_{i, j}\\&=(\varphi(\varphi^2(A)))_{x_1+x_2-j, -x_1+x_2+i}\\&=(\varphi^2(A))_{2x_1-i, 2x_2-j}\\&=A_{i, j}.
	\end{align*}
	Therefore, $\varphi^4(A) = A$ for all $A \in \mathcal{A}$, and hence the order of $\varphi$ is $4$. It follows that the subgroup $\mathcal{G}_\mathcal{S} = \langle \varphi,\, \tau_1^a \tau_2^b \rangle$ has order $4n$ since $n$ is odd. By the orbit-stabilizer theorem, the size of each equivalence class in $\bar{\mathcal{S}}$ is a divisor of $4n$.
\end{proof}

\subsection{Sudoku from diameter perfect codes}
We utilize the generator matrices $G_i$ defined in Theorem \ref{leeanticode} to construct diameter perfect codes. We denote the matrices corresponding to the cases $i=0$ and $i=1$ as $G'$ and $G''$, respectively:
\[G' = G_0 = \begin{bmatrix} t+1 & t+1 \\ 0 & 2(t+1) \end{bmatrix}\]
and
\[\quad G'' = G_1 = \begin{bmatrix} t+2 & t \\ 1 & 2t+1 \end{bmatrix}= \begin{bmatrix} 1 & 2t+1 \end{bmatrix}\] since the first row of $G''$ is a multiple of the second row. Let $\mathcal{C}'$ and $\mathcal{C}''$ denote the codes generated by $G'$ and $G''$, respectively.

Let $\mathcal{C}$ be a translate of a linear $t$-error-correcting diameter perfect code $\mathcal{C}_0$. We refer to $\mathcal{C}$ as \textit{Case I} if $\mathcal{C}_0$ is equivalent to $\mathcal{C}'$, and as \textit{Case II} if $\mathcal{C}_0$ is equivalent to $\mathcal{C}''$. When $t=1$, every $1$-error-correcting diameter perfect code is necessarily of either Case~I or Case~II. We restrict our attention to codes that belong to Case I or Case II.

Suppose $\mathcal{C}$ is of Case I, where $\mathcal{C} = \mathbf{x} + \mathcal{C}'P$ for some $\mathbf{x} \in \mathbb{Z}_n^2$ and a permutation matrix $P$. If $P\ne I$, then a generator matrix for $\mathcal{C}_0$ is given by
\[
\begin{bmatrix} t+1 & t+1 \\ 0 & 2(t+1) \end{bmatrix}
\begin{bmatrix} 0 & 1 \\ 1 & 0 \end{bmatrix}=
\begin{bmatrix} t+1 & t+1 \\ 2(t+1) & 0 \end{bmatrix},
\]
which generates the same code as
\[G'=\begin{bmatrix} t+1 & t+1 \\ 0 & 2(t+1) \end{bmatrix}.\]
So, without loss of generality, we assume $\mathcal{C}_0=\mathcal{C}'$.

\begin{theorem}
Let $\mathcal{C}_0$ be a linear $t$-error-correcting diameter perfect code with generator matrix $G_0$ and $\mathcal{C}=\mathbf{x}+\mathcal{C}_0\subseteq\mathbb{Z}_n^2$ for some $\mathbf{x}=(x_1, x_2)\in\mathbb{Z}_n^2$. Suppose $\mathcal{C}$ is either of Case I or Case II.
	\begin{enumerate}
	\item[(i)] If $\mathcal{C}$ is of Case I, let
	\[
	\mathcal{G}_\mathcal{S}=\langle \tau_1^{t+1}\tau_2^{t+1},\, \tau_2^{2(t+1)},\,\tau_2^{2x_2+1}s,\, \tau_1^{2x_1+2}\tau_2^{2x_2+1}r^2\rangle.
	\]
	\item[(ii)] If $\mathcal{C}$ is of Case II and $G_0=[a~~b]$, let
	\[
	\mathcal{G}_\mathcal{S}=\langle \tau_1^{a}\tau_2^{b},\, \tau_1^{2x_1+2}\tau_2^{2x_2+1}r^2\rangle.
	\]
	\end{enumerate}
	Then, for every $S\in \mathcal{S}$ and $g\in \mathcal{G}_\mathcal{S}$, we have $g\cdot S\in \mathcal{S}$.
\end{theorem}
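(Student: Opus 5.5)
The plan is to reduce every generator to a statement about positions, and then check that each position map permutes the blocks of the palette partition. Every element $g\in\mathcal{G}$ acts by $(g(S))_{p}=S_{f_g(p)}$ for some bijection $f_g:\mathbb{Z}_n^2\to\mathbb{Z}_n^2$. This $f_g$ is a composition of a coordinate shift, the reflection $(i,j)\mapsto(i,-j)$ (up to shift), and the half-turn $(i,j)\mapsto(-i,-j)$ (up to shift); it never involves a quarter turn.

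Such an $f_g$ sends rows to rows and columns to columns, so $g(S)$ is again a Latin square, as in the proof of Theorem~\ref{pSudoku}. For orthogonality to $\mathcal{I}_{\mathcal{C},\mathcal{A}}$, it suffices to show that $f_g$ maps each block $\mathcal{A}_i$ onto some block $\mathcal{A}_j$. Then the symbols of $g(S)$ on $\mathcal{A}_j$ are exactly those of $S$ on $\mathcal{A}_i$, which are all distinct. Since each $f_g$ is a Lee isometry of $\mathbb{Z}_n^2$, and $\mathcal{A}_i$ is the set of points at distance at most $t$ from its core $\{\mathbf{c}_i,\mathbf{c}_i+(1,0)\}$, it is enough to show that $f_g$ maps each core $\{\mathbf{c},\mathbf{c}+(1,0)\}$ with $\mathbf{c}\in\mathcal{C}$ onto a core $\{\mathbf{c}^*,\mathbf{c}^*+(1,0)\}$ with $\mathbf{c}^*\in\mathcal{C}$. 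It suffices to verify this for the generators.

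\textbf{Translations.} For $\tau_1^{u}\tau_2^{v}$ we get $f(p)=p-(u,v)$. When $(u,v)\in\mathcal{C}_0$, the core at $\mathbf{c}$ goes to the core at $\mathbf{c}-(u,v)\in\mathcal{C}$. This covers $\tau_1^{t+1}\tau_2^{t+1}$ and $\tau_2^{2(t+1)}$ in Case I, since $(t+1,t+1)$ and $(0,2(t+1))$ are the rows of $G'$. It also covers $\tau_1^a\tau_2^b$ in Case II.

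\textbf{Reflection generator (Case I).} Unwinding the definitions with $k=2x_2+1$ gives $(\tau_2^{k}s(S))_{i,j}=S_{i,\,n-1-j+k}=S_{i,\,2x_2-j}$. So $f(c_1,c_2)=(c_1,2x_2-c_2)$, which fixes the first coordinate. The core $\{\mathbf{c},\mathbf{c}+(1,0)\}$ therefore maps to $\{\mathbf{c}^*,\mathbf{c}^*+(1,0)\}$ with $\mathbf{c}^*=(c_1,2x_2-c_2)$. Writing $\mathbf{c}=\mathbf{x}+(u,v)$ with $(u,v)\in\mathcal{C}'$, we have $\mathbf{c}^*=\mathbf{x}+(u,-v)$. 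It remains to check that $\mathcal{C}'$ is closed under $(u,v)\mapsto(u,-v)$. This holds on generators: $(t+1,-(t+1))=(t+1,t+1)-(0,2(t+1))$, and $(0,-2(t+1))$ is the negative of a generator.

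\textbf{Half-turn generator (both cases).} Similarly, $(\tau_1^{2x_1+2}\tau_2^{2x_2+1}r^2(S))_{i,j}=S_{2x_1+1-i,\,2x_2-j}$, using $(r^2(A))_{i,j}=A_{n-1-i,n-1-j}$. The core $\{\mathbf{c},\mathbf{c}+(1,0)\}$ then maps to $\{(2x_1+1-c_1,2x_2-c_2),(2x_1-c_1,2x_2-c_2)\}$, which is $\{\mathbf{c}^*+(1,0),\mathbf{c}^*\}$ with $\mathbf{c}^*=2\mathbf{x}-\mathbf{c}=\mathbf{x}-(\mathbf{c}-\mathbf{x})$. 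This lies in $\mathcal{C}$ because $\mathcal{C}_0$ is linear, so the argument works uniformly in Case I and Case II.

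I expect the main obstacle to be bookkeeping rather than any conceptual step. The index computations must be done carefully so that the shift exponents ($2x_1+2$, $2x_2+1$) come out exactly right. This is what makes the image core again have the orientation $\{\mathbf{c}^*,\mathbf{c}^*+(1,0)\}$ rather than $\{\mathbf{c}^*,\mathbf{c}^*-(1,0)\}$. The key point is that the extra $+1$ in the row shift absorbs the flip of the core direction under the half-turn. Since both generators are products of these maps, closure under composition then gives the result for all $g\in\mathcal{G}_\mathcal{S}$.
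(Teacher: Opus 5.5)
Your proposal is correct and follows essentially the same route as the paper: a generator-by-generator check that translations by codewords, the reflection $(c_1,c_2)\mapsto(c_1,2x_2-c_2)$ (via closure of $\mathcal{C}'$ under $(u,v)\mapsto(u,-v)$), and the half-turn with $\mathbf{c}^*=2\mathbf{x}-\mathbf{c}$ (where the extra row shift restores the core orientation) all send cores to cores. Your remark that each position map is a Lee isometry, so that mapping cores onto cores forces blocks onto blocks, makes explicit a reduction the paper uses only implicitly. Since the position map then permutes the blocks, it also does not matter that you wrote the block correspondence in the reverse direction.
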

\begin{proof}
We show that, in each case, the action of every generator of $\mathcal{G}_\mathcal{S}$ on a Sudoku grid $S \in \mathcal{S}$ preserves orthogonality with the corresponding palette grid $\mathcal{I}_{\mathcal{C}, \mathcal{A}}$.
\begin{enumerate}
\item[(i)] Let us assume that $\mathcal{C}$ is of Case I. We consider each generator of $\mathcal{G}_\mathcal{S}$ separately. Take any $\mathbf{c}=(c_1, c_2)\in\mathcal{C}$ and $S\in\mathcal{S}$.
	\begin{itemize}
	\item[(1)] We have
		\[
		(\tau_1^{t+1}\tau_2^{t+1}(S))_{c_1, c_2}=S_{c_1-(t+1), c_2-(t+1)}
		\]
		and
		\[
		(\tau_2^{2(t+1)}(S))_{c_1, c_2}=S_{c_1, c_2-2(t+1)}.
		\]
		Since both $(c_1-(t+1), c_2-(t+1))$ and $(c_1, c_2-2(t+1))$ are codewords of $\mathcal{C}$, $\tau_1^{t+1}\tau_2^{t+1}(S)$ and $\tau_2^{2(t+1)}(S)$ are orthogonal to $\mathcal{I}_{\mathcal{C}, \mathcal{A}}$.
	\item[(2)] Note that
		\[
		(\tau_2^{2x_2+1}(s(S)))_{c_1, c_2}=(s(S))_{c_1, c_2-(2x_2+1)}=S_{c_1, n-c_2+2x_2}.
		\]
		Since $\mathbf{c}=\mathbf{x}+\alpha(t+1, t+1)+\beta(0, 2(t+1))$ for some $\alpha, \beta\in\mathbb{Z}_n$, it follows that
		\[
		(c_1, n-c_2+2x_2)=\mathbf{x}+\alpha(t+1, t+1)-(\alpha+\beta)(0, 2(t+1))
		\]
		is also a codeword of $\mathcal{C}$. So $(\tau_2^{2x_2+1}s(S))$ is orthogonal to $\mathcal{I}_{\mathcal{C}, \mathcal{A}}$.
	\item[(3)] Since a $180^\circ$ rotation reverses the positions of the two entries in the core of each translated anticode on the grid, it suffices to show that
		\[
		(\tau_1^{2x_1+2}\tau_2^{2x_2+1} r^2(S))_{c_1, c_2} = S_{c_1'+1, c_2'}
		\]
		and
		\[
		(\tau_1^{2x_1+2}\tau_2^{2x_2+1} r^2(S))_{c_1+1, c_2} = S_{c_1', c_2'}
		\]
		for some codeword $\mathbf{c}' = (c_1', c_2') \in \mathcal{C}$. Let $x'=2x_1+2$ and $x''=2x_2+1$. Then we compute:
		\[
			(\tau_1^{x'}\tau_2^{x''} (r^2(S)))_{c_1, c_2}=(r^2(S))_{c_1-x', c_2-x''}=S_{2x_1-c_1+1, 2x_2-c_2}.
		\]
		If we take $\mathbf{c}'=2\mathbf{x}-\mathbf{c}\in\mathcal{C}$, then we have
		\[
		(\tau_1^{2x_1+2}\tau_2^{2x_2+1} r^2(S))_{c_1, c_2} = S_{c_1'+1, c_2'}.
		\]
		Similarly,
		\[
			(\tau_1^{x'}\tau_2^{x''} r^2(S))_{c_1+1, c_2}=S_{2x_1-c_1, 2x_2-c_2}=S_{c_1', c_2'}.
		\]
		Therefore, $\tau_1^{2x_1+2-n} \tau_2^{2x_2+1-n} r^2(S)$ is orthogonal to $\mathcal{I}_{\mathcal{C}, \mathcal{A}}$.
	\end{itemize}
	Hence $g \cdot S \in \mathcal{S}$ for every generator $g \in \mathcal{G}_\mathcal{S}$, and the group $\mathcal{G}_\mathcal{S}$ acts on $\mathcal{S}$ with respect to the code $\mathcal{C}$.
\item[(ii)] Suppose that $\mathcal{C}$ is of Case II. As in the previous case, we verify that each generator of $\mathcal{G}_\mathcal{S}$ maps any $S \in \mathcal{S}$ to another element in $\mathcal{S}$. Take any $\mathbf{c} = (c_1, c_2) \in \mathcal{C}$ and $S \in \mathcal{S}$.

Note that
\[
(\tau_1^{a}\tau_2^{b}(S))_{c_1, c_2}=S_{c_1-a, c_2-b}.
\]
Since $\mathbf{c}-(a, b)\in\mathcal{C}$, it follows that $\tau_1^{a}\tau_2^{b}(S)$ is orthogonal to $\mathcal{I}_{\mathcal{C}, \mathcal{A}}$.

Note that the proof of (i)-(3) does not depend on whether $\mathcal{C}$ is of Case I or Case II. Therefore, as shown in (i), we have
\[
\mathcal{I}_\mathcal{C}\perp\tau_1^{2x_1+2} \tau_2^{2x_2+1} r^2(S).
\]
Thus $g\cdot S\in\mathcal{S}$ for all $g\in \mathcal{G}_\mathcal{S}$. This completes the proof.
\end{enumerate}
\end{proof}
Analogously to the perfect Sudoku grids over $\mathbb{Z}_5$, $\mathcal{G}_\mathcal{S}$ induces an equivalence relation on $\bar{\mathcal{S}}$ via its action. Figure~\ref{fig:8x8_operation_examples} illustrates the action of $\mathcal{G}_\mathcal{S}$ on a diameter perfect Sudoku grid $S$ with respect to Case I code $\mathcal{C}'$ over $\mathbb{Z}_8$.

\begin{figure}[ht]
\centering
\includegraphics[width=0.7\textwidth]{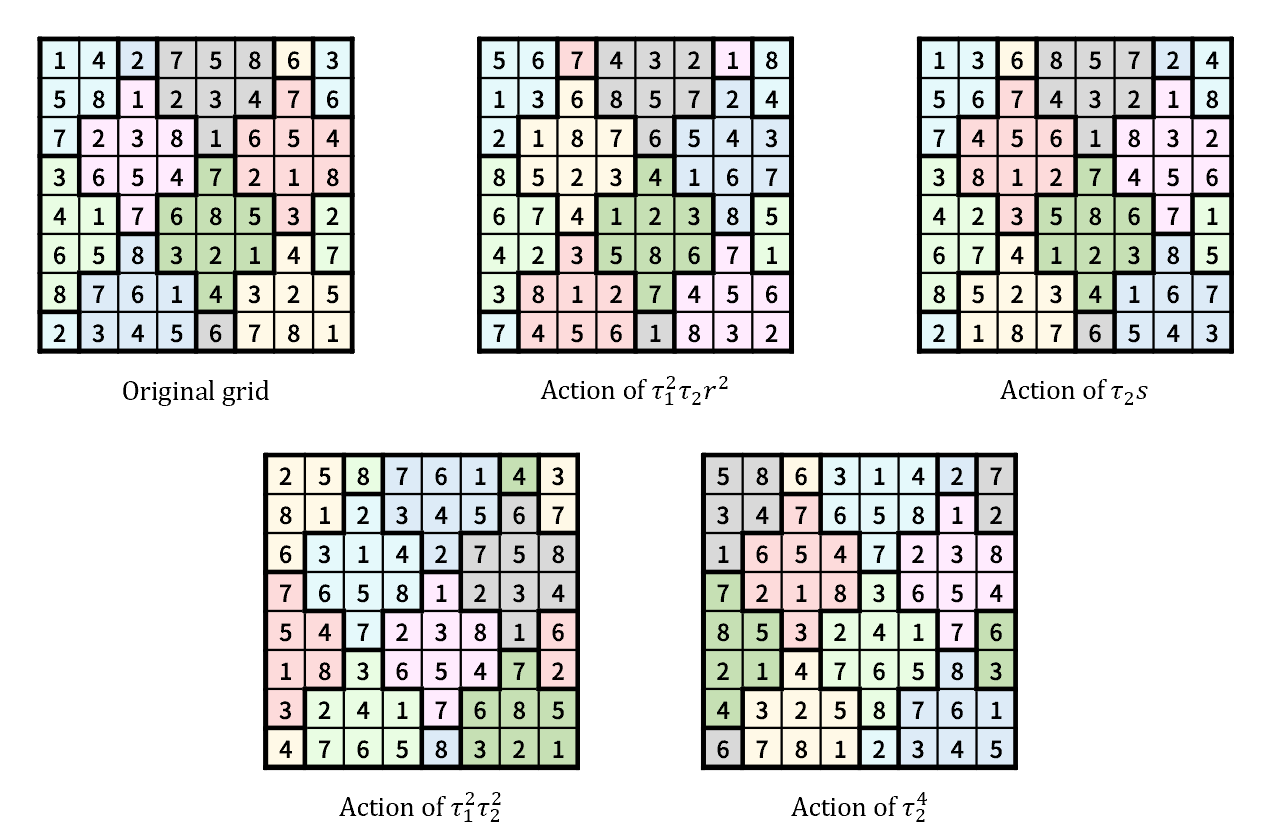}
\caption{Action of $\mathcal{G}_\mathcal{S}$ on a diameter perfect Sudoku grid with respect to $\mathcal{C}'$ over $\mathbb{Z}_8$}
\label{fig:8x8_operation_examples}
\end{figure}

\begin{theorem}
Let $\mathcal{C}_0$ be a linear $t$-error-correcting diameter perfect code with generator matrix $G_0$ and $\mathcal{C}=\mathbf{x}+\mathcal{C}_0\subseteq\mathbb{Z}_n^2$ for some $\mathbf{x}=(x_1, x_2)\in\mathbb{Z}_n^2$. Suppose $\mathcal{C}$ is either of Case I or Case II. Then the following holds:
\begin{enumerate}
\item[(i)] If $\mathcal{C}$ is of Case I and
	\[
	\mathcal{G}_\mathcal{S}=\langle \tau_1^{t+1}\tau_2^{t+1},\, \tau_2^{2(t+1)},\, \tau_2^{2x_2+1}s,\, \tau_1^{2x_1+2}\tau_2^{2x_2+1}r^2\rangle,
	\]
	then each equivalence class in $\bar{\mathcal{S}}$ under the action of $\mathcal{G}_\mathcal{S}$ has a size which is a divisor of $4n$.
\item[(ii)] If $\mathcal{C}$ is of Case II and
	\[
	\mathcal{G}_\mathcal{S}=\langle \tau_1^{a}\tau_2^{b},\, \tau_1^{2x_1+2}\tau_2^{2x_2+1}r^2\rangle
	\]
	where $G_0=[a~~b]$, then, each equivalence class in $\bar{\mathcal{S}}$ has a size which is a divisor of $2n$.
\end{enumerate}
\end{theorem}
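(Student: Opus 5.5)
The plan is to compute the order of $\mathcal{G}_\mathcal{S}$ in each case and then apply the orbit-stabilizer theorem, exactly as in the proof for perfect Sudoku grids. The action of $\mathcal{G}_\mathcal{S}$ on $\bar{\mathcal{S}}$ is induced from the action on $\mathcal{S}$. Hence the size of each equivalence class divides $|\mathcal{G}_\mathcal{S}|$, and it suffices to show that $|\mathcal{G}_\mathcal{S}|$ divides $4n$ in Case I and $2n$ in Case II. Rather than computing the order exactly, I will write $\mathcal{G}_\mathcal{S}$ as a translation subgroup $T$ extended by a small group of involutions, and bound the index.

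\textbf{Case I.} Let $T=\langle \tau_1^{t+1}\tau_2^{t+1},\tau_2^{2(t+1)}\rangle$. Pure translations act on arrays through the vector $(u,v)\in\mathbb{Z}_n^2$, via $A\mapsto A_{i-u,j-v}$, and this action is faithful. Therefore $T$ is isomorphic to the subgroup of $\mathbb{Z}_n^2$ generated by the rows of $G'$, namely the linear code $\mathcal{C}'$, which has exactly $n$ elements. So $|T|=n$.

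Next I make the two remaining generators explicit as index maps, using the computations already done in the preceding theorem:
\[
(\tau_2^{2x_2+1}s(A))_{i,j}=A_{i,\,2x_2-j},\qquad (\tau_1^{2x_1+2}\tau_2^{2x_2+1}r^2(A))_{i,j}=A_{2x_1+1-i,\,2x_2-j}.
\]
Call these $\sigma$ and $\rho$. From the formulas, each is an involution. Their product is $(\sigma\rho(A))_{i,j}=A_{2x_1+1-i,\,j}$, which is again an involution, so $\sigma$ and $\rho$ commute and generate a group $V$ of order dividing $4$.

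I then check that $T$ is normal in $\mathcal{G}_\mathcal{S}$. Conjugating a translation by $(u,v)$ by $\rho$ gives the translation by $(-u,-v)$. Conjugating it by $\sigma$ gives the translation by $(u,-v)$. The code $\mathcal{C}'$ is closed under both maps: for the second, use $(t+1,-(t+1))=(t+1,t+1)-(0,2(t+1))$. Hence $\mathcal{G}_\mathcal{S}=TV$, and $\mathcal{G}_\mathcal{S}/T$ is a quotient of $V$. This gives $|\mathcal{G}_\mathcal{S}|=|T|\cdot[\mathcal{G}_\mathcal{S}:T]$, which divides $4n$.

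\textbf{Case II.} Here $G_0=[a~~b]$, and I take $T=\langle\tau_1^a\tau_2^b\rangle$. Up to the equivalence used to define Case II, $(a,b)$ is $(1,2t+1)$ or $(2t+1,1)$. Since one coordinate is $1$, the translation has order exactly $n$, so $|T|=n$. The element $\rho$ is an involution as above, and conjugation by $\rho$ inverts every translation, so $T$ is normal. Therefore $|\mathcal{G}_\mathcal{S}|$ divides $2n$.

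The main obstacle is bookkeeping rather than anything conceptual. I must verify that $\sigma$ and $\rho$ genuinely commute as index maps, with the translation offsets $2x_1+2$ and $2x_2+1$ included, so that no extra translation outside $T$ is generated; the explicit formulas above are designed to settle this. I must also confirm that the translation group of Case I has size $n$ rather than $n^2$ or a divisor, which follows from identifying it with $\mathcal{C}'$ and using $|\mathcal{C}'|=n$ from the Code-Anticode Bound. Once these are in place, the orbit-stabilizer theorem finishes both parts.
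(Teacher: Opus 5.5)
Your proof is correct, but it bounds the group order differently from the paper.

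\textbf{What the paper does.} It computes the orders of the individual generators: $2(t+1)$ for $\tau_1^{t+1}\tau_2^{t+1}$, $t+1$ for $\tau_2^{2(t+1)}$, and $2$ for each of $\tau_2^{2x_2+1}s$ and $\tau_1^{2x_1+2}\tau_2^{2x_2+1}r^2$ (and $n$, $2$ in Case II). It then multiplies these to assert $|\mathcal{G}_\mathcal{S}|=2(t+1)^2\cdot 2\cdot 2=4n$ (resp.\ $2n$). That multiplication is only valid because of structural facts the paper never states: the two cyclic translation subgroups meet trivially, and the two involutions commute and normalize the translation subgroup.

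\textbf{What your route adds.} You supply exactly these missing facts. You identify the translation part $T$ with the code $\mathcal{C}'$, so $|T|=n$ directly. Using the explicit index maps $\sigma:(i,j)\mapsto(i,2x_2-j)$ and $\rho:(i,j)\mapsto(2x_1+1-i,2x_2-j)$, you check that $\sigma\rho=\rho\sigma$. You also check that conjugation sends the translation by $(u,v)$ to the translation by $(u,-v)$ or $(-u,-v)$, and both preserve $\mathcal{C}'$. Hence $T$ is normal, $\mathcal{G}_\mathcal{S}=TV$, and $|\mathcal{G}_\mathcal{S}|$ divides $4n$; the dihedral-type argument in Case II works the same way.

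\textbf{What you give up.} You only obtain divisibility rather than the exact order the paper claims, though divisibility is all the statement needs. To recover the paper's equality, add one observation: for $n=2(t+1)^2>2$, none of $\sigma$, $\rho$, $\sigma\rho$ is a translation. Then $T\cap V$ is trivial and $|V|=4$ (resp.\ $T\cap\langle\rho\rangle$ is trivial in Case II).

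\textbf{A minor simplification.} In Case II, $|T|=n$ holds for any generator $(a,b)$ of $\mathcal{C}_0$, since $|\mathcal{C}_0|=n$. You therefore do not need the specific form $(1,2t+1)$ or $(2t+1,1)$.
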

\begin{proof}
We compute the order of $\mathcal{G}_\mathcal{S}$ in each case. In case (i), we will show that $|\mathcal{G}_\mathcal{S}| = 4n$, and in case (ii), that $|\mathcal{G}_\mathcal{S}| = 2n$. By the orbit-stabilizer theorem, each equivalence class in $\bar{\mathcal{S}}$ has size dividing $|\mathcal{G}_\mathcal{S}|$.
\begin{enumerate}
\item[(i)] Note that
\[
|\tau_1|=|\tau_2|=n, |r|=4\quad\mbox{and}\quad |s|=2.
\]
Since $\tau_1\tau_2=\tau_2\tau_1$, the order of $\tau_1^{t+1}\tau_2^{t+1}$ is given by
\[
|\tau_1^{t+1}\tau_2^{t+1}|=|(\tau_1\tau_2)^{t+1}|=\frac{n}{t+1}=2(t+1).
\]
Similarly, the order of $\tau_2^{2(t+1)}$ is
\[
|\tau_2^{2(t+1)}|=\frac{n}{2(t+1)}=t+1.
\]
Next, we have
\[
((\tau_2^{2x_2+1}s)^2(S))_{i, j}=(\tau_2^{2x_2+1}s(S))_{i, n-j+2x_2}=S_{i, j},
\]
which gives $|\tau_2^{2x_2+1}s|=2$. Finally, consider the element $\tau_1^{2x_1 + 2} \tau_2^{2x_2 + 1} r^2$. Let $x'=2x_1 + 2$ and $x''=2x_2 + 1$. Then we compute:
	\[
	((\tau_1^{x'}\tau_2^{x''}r^2)^2(S))_{i, j}=(\tau_1^{x'}\tau_2^{x''}r^2(S))_{2x_1-i+1, 2x_2-j}=S_{i ,j}.
	\]
	Thus, $|\tau_1^{2x_1 + 2} \tau_2^{2x_2 + 1} r^2|=2$. It follows that	$|\mathcal{G}_\mathcal{S}|=2(t+1)^2\cdot 2\cdot 2=4n.$
\item[(ii)] Note that $(a, b)$ is either $(1,\, 2t+1)$ or $(2t+1,\, 1)$. Thus $|\tau_1^{a} \tau_2^{b}| = n$. As shown in case (i), $|\tau_1^{2x_1 + 2} \tau_2^{2x_2 + 1} r^2| = 2$. Therefore, $|\mathcal{G}_\mathcal{S}| = 2n$.
\end{enumerate}
This completes the proof. \end{proof}

\section{Solutions of some specific Sudoku}
In this section, we compute the number of (diameter) perfect Sudoku grids with respect to (diameter) perfect codes over certain small rings of integer modulo $n$. Furthermore, we classify these Sudoku grids into equivalence classes under the appropriate group actions.

For codes of length $2$, the equivalence under coordinate permutation corresponds precisely to rotations and reflections. Thus, $\mathcal{C}_1$ and $\mathcal{C}_2$ are related by a rigid motion, which induces a bijection between their corresponding sets of Sudoku grids. This proves the following.

\begin{theorem}\label{eqcode}
Let $\mathcal{C}_1$ and $\mathcal{C}_2$ be two (diameter) perfect codes in $\mathbb{Z}_n^2$ such that $\mathcal{C}_1$ is a translate of a code equivalent to $\mathcal{C}_2$. Then the number of (diameter) perfect Sudoku grids that can be constructed from $\mathcal{C}_1$ is equal to the number of (diameter) perfect Sudoku grids that can be constructed from $\mathcal{C}_2$.
\end{theorem}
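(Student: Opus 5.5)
The plan is to build an explicit bijection between the two sets of Sudoku grids, using a rigid motion of the $n\times n$ torus $\mathbb{Z}_n^2$ that carries $\mathcal{C}_2$ onto $\mathcal{C}_1$ together with its tiles. By hypothesis, $\mathcal{C}_1=\mathbf{x}+\mathcal{C}_2P$ for some $\mathbf{x}\in\mathbb{Z}_n^2$ and some $2\times 2$ permutation matrix $P$, so $P$ is either $I$ or the swap. Define the map $\psi:\mathbb{Z}_n^2\to\mathbb{Z}_n^2$ by $\psi(\mathbf{u})=\mathbf{x}+\mathbf{u}P$. It is a bijection, and it is a Lee isometry, because the Lee weight is invariant under permuting coordinates and under translation. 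Consequently $\psi$ maps $\mathcal{C}_2$ onto $\mathcal{C}_1$, each ball $\mathcal{B}_t(\mathbf{c})$ onto $\mathcal{B}_t(\psi(\mathbf{c}))$, and each anticode translate onto an anticode translate of the same diameter and size. On arrays, $\psi$ induces $\Psi(A)_{\psi(i,j)}=A_{i,j}$. This is a composition of translations $\tau_1,\tau_2$ with, possibly, the transpose. The transpose equals $r\circ s$ up to a translation, or one can simply treat it as a rigid motion directly.

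The key steps, in order, are these.

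First, $\Psi$ preserves the Latin property. Translations permute rows among themselves and columns among themselves, and transposition exchanges the set of rows with the set of columns, so every row and every column of $\Psi(S)$ still contains each symbol exactly once.

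Second, $\Psi$ carries orthogonality to one palette grid into orthogonality to the other. Take $S\perp\mathcal{I}_{\mathcal{C}_2}$. For the codeword $\mathbf{c}_i$, the cells of $\Psi(S)$ lying in $\psi(\mathcal{B}_t(\mathbf{c}_i))=\mathcal{B}_t(\psi(\mathbf{c}_i))$ hold exactly the entries of $S$ on $\mathcal{B}_t(\mathbf{c}_i)$, which are all $n$ symbols. The regions of $\mathcal{I}_{\mathcal{C}_1}$ are exactly the balls around the points of $\psi(\mathcal{C}_2)=\mathcal{C}_1$. Relabeling the palette indices does not affect orthogonality, so $\Psi(S)\perp\mathcal{I}_{\mathcal{C}_1}$.

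Third, $\Psi$ is invertible, with $\Psi^{-1}$ induced by $\psi^{-1}$, and the same argument applies in the reverse direction. This gives a bijection $\mathcal{S}_{\mathcal{C}_2}\to\mathcal{S}_{\mathcal{C}_1}$, and hence equal counts.

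The main obstacle is the diameter perfect case. There the palette grid depends on the chosen anticode translates, with the convention that the core of the anticode at $\mathbf{c}$ is $\{\mathbf{c},\mathbf{c}+(1,0)\}$. If $P$ is the swap, $\psi$ sends this core to $\{\psi(\mathbf{c}),\psi(\mathbf{c})+(0,1)\}$, which is a vertical rather than horizontal domino. The image tiling is therefore a tiling by translates of a transposed anticode and need not coincide with $\mathcal{I}_{\mathcal{C}_1,\mathcal{A}}$. I would handle this by reading the statement as counting grids for the pair (code, associated anticode tiling), where the tiling of $\mathcal{C}_1$ is the image under $\psi$ of the tiling of $\mathcal{C}_2$; this is what "related by a rigid motion" in the preceding paragraph means.

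Failing that reading, I would try to show directly that for the codes of Cases I and II the transposed-core tiling differs from the standard one by a rigid motion preserving $\mathcal{C}_1$. Candidates for such a motion are a rotation $r$ combined with a translation, or the $180^\circ$ maps $\tau_1^{2x_1+2}\tau_2^{2x_2+1}r^2$ used in the earlier theorem. Composing with that motion would then give the required bijection. When $P=I$, $\psi$ is a pure translation, cores are preserved, and no issue arises.
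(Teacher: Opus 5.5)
Your proposal is correct and is the paper's argument made explicit. The paper's entire proof is the remark that a coordinate permutation composed with a translation is a rigid motion of the grid, and therefore induces a bijection between the two sets of Sudoku grids; this is precisely your $\psi$ and $\Psi$.

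The core-orientation issue you raise for diameter perfect codes is real, and the paper passes over it, implicitly relying on your transported-tiling reading. Under the literal fixed-core convention, note that for the codes the paper actually uses one has $\mathcal{C}_0P=\mathcal{C}_0$: this holds for Case I for every $t$, and for Case II at $t=1$, where $\langle(1,3)\rangle=\langle(3,1)\rangle$ in $\mathbb{Z}_8^2$. So $\mathcal{C}_1$ is a plain translate of $\mathcal{C}_2$ and a translation already works. Your fallback, by contrast, cannot succeed in general: $r^2$ preserves the core direction, and for Case II with $t\ge 2$ no Lee isometry carrying the code to a translate of itself reverses that direction.
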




\begin{definition}
Let $S$ be a Sudoku grid with respect to some palette grid $\mathcal{I}$. A \textit{minimal Sudoku} $m$ for $S$ is a Sudoku game that satisfies the following two conditions:
\begin{enumerate}
\item[(i)] $S$ is the unique solution of $m$.
\item[(ii)] If any single hint (filled entry) is removed, the resulting game no longer has a unique solution.
\end{enumerate}
\end{definition}

Since the action of $\mathcal{G}_\mathcal{S}$ preserves the Sudoku property and the inclusion of hints, it follows that the minimal Sudoku property is also preserved. We present the following theorem, whose detailed proof is omitted.

\begin{theorem}\label{eqSudoku}
Let $S_1, S_2 \in \mathcal{S}$ be two (diameter) perfect Sudoku grids with respect to some (diameter) perfect code $\mathcal{C}$, which are equivalent under the action of $\mathcal{G}_\mathcal{S}$, i.e., $S_2=\phi(S_1)$ for some $\phi\in\mathcal{G}_\mathcal{S}$. Then $\phi$ is a bijection between the sets of minimal Sudoku for $S_1$ and $S_2$.
\end{theorem}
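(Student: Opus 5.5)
Every $\phi\in\mathcal{G}_\mathcal{S}$ acts on an $n\times n$ array by permuting its cells. Composing the explicit formulas for $r$, $s$, $\tau_1$, $\tau_2$ gives a bijection $\pi_\phi:\mathbb{Z}_n^2\to\mathbb{Z}_n^2$ with $(\phi(A))_{\pi_\phi(i,j)}=A_{i,j}$. This description extends at once to partial arrays (Sudoku games with empty cells), where $\phi$ sends a game $m$ with hint set $H\subseteq\mathbb{Z}_n^2$ to the game $\phi(m)$ with hint set $\pi_\phi(H)$ and the same values. Since $\mathcal{G}_\mathcal{S}$ is a group, $\phi^{-1}\in\mathcal{G}_\mathcal{S}$ acts by $\pi_\phi^{-1}$. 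Hence $\phi$ is a bijection on the set of all partial arrays, and it preserves inclusion of hint sets: $m'\subseteq m$ if and only if $\phi(m')\subseteq\phi(m)$.

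The key step is that $\phi$ restricts to a bijection of $\mathcal{S}$ onto itself. By the two theorems of Section~5 establishing that $g\cdot S\in\mathcal{S}$ for all $g\in\mathcal{G}_\mathcal{S}$ (the perfect and the diameter perfect case), $\phi(\mathcal{S})\subseteq\mathcal{S}$ and $\phi^{-1}(\mathcal{S})\subseteq\mathcal{S}$, so $\phi|_\mathcal{S}$ is a bijection. Moreover, a complete grid $T$ is a solution of a game $m$ exactly when $T\in\mathcal{S}$ and $T$ agrees with $m$ on the hints. Since $\phi$ relocates hints and grid entries by the same cell permutation, $T$ solves $m$ if and only if $\phi(T)$ solves $\phi(m)$. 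Therefore $\phi$ gives a bijection between the solution set of $m$ and that of $\phi(m)$.

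Now take a minimal Sudoku $m$ for $S_1$. Its solution set is $\{S_1\}$, so the solution set of $\phi(m)$ is $\{\phi(S_1)\}=\{S_2\}$, which is condition (i). For condition (ii), removing a single hint at a cell $p$ from $\phi(m)$ yields $\phi(m')$, where $m'$ is $m$ with the hint at $\pi_\phi^{-1}(p)$ removed. Since $m$ is minimal, $m'$ has zero or at least two solutions, and by the solution bijection so does $\phi(m')$. Thus $\phi(m)$ is a minimal Sudoku for $S_2$. Applying the same argument to $\phi^{-1}$, with $S_1=\phi^{-1}(S_2)$, shows that $\phi^{-1}$ maps minimal Sudoku for $S_2$ to minimal Sudoku for $S_1$. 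So $\phi$ restricted to these sets is a bijection with inverse $\phi^{-1}$.

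The only point needing care is the invariance $\phi(\mathcal{S})=\mathcal{S}$ in the second step, since the solution correspondence fails without it. That invariance is exactly what the earlier theorems provide for the generators, and it extends to all of $\mathcal{G}_\mathcal{S}$ by closure under composition. Everything else is the formal observation that $\phi$ is a cell permutation applied uniformly to games and grids.
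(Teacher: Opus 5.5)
Your proof is correct and follows the paper's route; the paper omits the details and only remarks that the action of $\mathcal{G}_\mathcal{S}$ preserves the Sudoku property and the inclusion of hints, so the minimal Sudoku property is preserved. You make this precise: $\phi$ acts as a cell permutation, $\phi(\mathcal{S})=\mathcal{S}$ by the Section~5 invariance theorems, this gives a bijection between the solution sets of $m$ and $\phi(m)$, and removing a hint commutes with $\phi$.
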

	
	


\subsection{Perfect Sudoku from perfect codes over \texorpdfstring{$\mathbb{Z}_5$}{Z5}}

In this subsection, we consider perfect Sudoku grids with respect to a $1$-error-correcting perfect code over $\mathbb{Z}_5$. The underlying perfect code is $\mathcal{C}=(2, 2) + \mathcal{C}_0$ where $\mathcal{C}_0\subseteq\mathbb{Z}_5^2$ is a linear code with generator matrix $G=[3~~1]$. However, perfect Sudoku grids with respect to $\mathcal{C}$ represent all perfect Sudoku grids arising from perfect codes over $\mathbb{Z}_5$ by Theorem~\ref{eqcode}.

We found 2,040 perfect Sudoku grids satisfying the constraints of perfect Sudoku grids over $\mathbb{Z}_5$. Taking relabeling into account, these 2,040 grids are partitioned into $\frac{2040}{5!} = 17$ equivalence classes. From Theorem~\ref{pSudoku}, there is a well-defined group action of $\mathcal{G}_\mathcal{S}$. With $\mathbf{x}=(2, 2)$ and $G=[3 \;\; 1]$, we obtain $\mathcal{G}_\mathcal{S}=\langle r, \tau_1^3\tau_2\rangle$. We further analyze the relabeling equivalence classes $\bar{\mathcal{S}}$ under the actions of rotation $\langle r \rangle$, translation $\langle \tau_1^3\tau_2 \rangle$, and the full group $\langle r, \tau_1^3\tau_2\rangle$.

\begin{table}[ht]
\centering
\renewcommand{\arraystretch}{1.4}
\begin{tabular}{|c|c|c|c|}
\hline
\textbf{Class size} & $\langle r \rangle$ & $\langle \tau_1^3\tau_2\rangle$ & $\langle r, \tau_1^3\tau_2\rangle$ \\
\hline
10 &  &  & 1 \\
5  &  & 3  & 1 \\
4  & 3  &  &  \\
2  & 1  &  &  \\
1  & 3  & 2  & 2 \\
\hline
\textbf{Total} & \textbf{7} & \textbf{5} & \textbf{4} \\
\hline
\end{tabular}
\caption{Equivalence class size distribution of \\perfect Sudoku grids with respect to $\mathcal{C}$ over $\mathbb{Z}_5$}
\label{tab:5x5-equiv-summary}
\end{table}

Table~\ref{tab:5x5-equiv-summary} presents the classification of perfect Sudoku grids derived from perfect codes over $\mathbb{Z}_5$. As expected, the rotation classes have sizes $1$, $2$, or $4$, translation classes have sizes $1$ or $5$, and classes under the combined action of rotation and translation have sizes that divide $10$.

The equivalence classes under the action $\langle r, \tau_1^3\tau_2\rangle$ are illustrated in Figure~\ref{fig:combined_classes_visualization_2}. Since $|\langle r, \tau_1^3\tau_2\rangle|=10$, each equivalence class has a size that divides $10$. Class $1$ consists of perfect Sudoku grids for which every application of $\langle r, \tau_1^3\tau_2\rangle$ produces a distinct grid, whereas class $2$ consists of perfect Sudoku grids such that the application of $\tau_1^3\tau_2$ gives a distinct grid. The grids in classes $3$ and $4$ are fixed points under the action of $\langle r, \tau_1^3\tau_2\rangle$.

\begin{figure}[ht]
\centering
\includegraphics[width=0.7\textwidth]{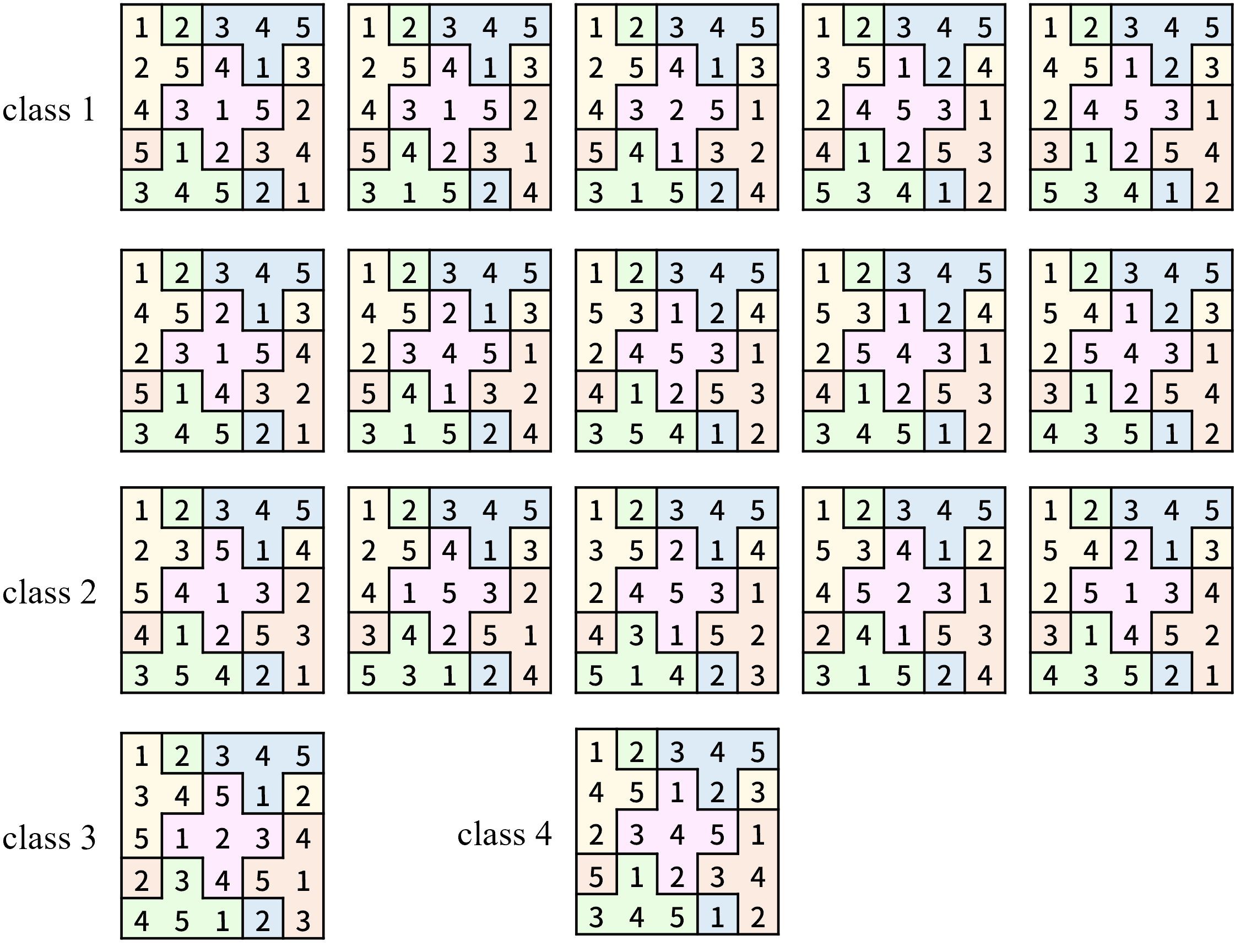}
\caption{All equivalence classes under the action of $\langle r, \tau_1^3\tau_2\rangle$}
\label{fig:combined_classes_visualization_2}
\end{figure}

We observe that for each grid in class 3 in Figure~\ref{fig:combined_classes_visualization_2}, every ball of radius 1 in the grid contains each of the numbers 1 through 5 exactly once, regardless of the center points of the balls. This implies that the grid is not only a Sudoku grid with respect to $\mathcal{C}$, but also a Sudoku grid with respect to any perfect code of length $2$ and minimum distance $3$ by regarding the codewords as the center points. This motivates the following definition.

An array $S$ is called a \textit{special Sudoku grid over $\mathbb{Z}_5$} if it is a Sudoku grid with respect to any $(2, 5, 3)$ perfect code over $\mathbb{Z}_5$.

We can check that Sudoku grids in class 3 and class 4 are special Sudoku grids. Conversely, we show that any special Sudoku grid over $\mathbb{Z}_5$ must belong to either class 3 or class 4 as follows. Suppose that $S$ is a special Sudoku grid over $\mathbb{Z}_5$, and for each $1 \le k \le 5$, define the set of positions for value $k$ by
\[S_k=\{(i, j)\in\mathbb{Z}_5^2~|~S_{i, j}=k\}.\]
Since every ball of radius 1 contains exactly one position from $S_k$, the minimum Lee distance between any two distinct points in $S_k$ must be at least 3. This distance property identifies $S_k$ as a $(2, 5, 3)$ perfect code over $\mathbb{Z}_5$ for each $k$. Then $S_k=\mathbf{x}+\mathcal{C'}$ for some $\mathbf{x} \in \mathbb{Z}_5^2$ and a linear $(2, 5, 3)$ perfect code $\mathcal{C'}$ by Theorem~\ref{pcode}.

Let $\mathcal{C}_0$ and $\mathcal{C}_1$ be two distinct linear $(2, 5, 3)$ perfect codes over $\mathbb{Z}_5$. Since these codes are one-dimensional subspaces of $\mathbb{Z}_5^2$, they span $\mathbb{Z}_5^2$. Then, for any translates $\mathbf{u}+\mathcal{C}_0$ of $\mathcal{C}_0$ and $\mathbf{v}+\mathcal{C}_1$ of $\mathcal{C}_1$, we have
\[
\mathbf{u}-\mathbf{v}=\mathbf{c}_1-\mathbf{c}_0
\]
for some $\mathbf{c}_0\in\mathcal{C}_0$ and $\mathbf{c}_1\in\mathcal{C}_1$. This implies that $\mathbf{u}+\mathbf{c}_0=\mathbf{v}+\mathbf{c}_1$, that is, $\mathbf{u}+\mathcal{C}_0$ and $\mathbf{v}+\mathcal{C}_1$ are not disjoint for any $\mathbf{u}, \mathbf{v}\in\mathbb{Z}_5^2$. Since the sets $\{S_k\}_{k=1}^5$ are pairwise disjoint by definition, each $S_k$ must be a translate of the same linear $1$-error-correcting perfect code $\mathcal{C}_0\subseteq\mathbb{Z}_5^2$, namely,
\[
S_k = \mathbf{u}_k + \mathcal{C}_0
\]
for some $\mathbf{u}_k \in \mathbb{Z}_5^2$. Since $S_1, \dots, S_5$ are translates of one another, any translation of the grid $S$ is equivalent to $S$ up to relabeling.

We next examine the effect of the rotation on a special Sudoku grid. Since the rotation of $S$ preserves the distances between elements of each $S_k$, it sends a perfect code into a perfect code. Also, if $(a, b)$ is a generator of $\mathcal{C}_0$, then the rotated perfect code is $\mathbf{x}+\langle(-b, a)\rangle=\mathbf{x}+\langle(a, b)\rangle=\mathbf{x}+\mathcal{C}_0$ for some $\mathbf{x}\in\mathbb{Z}_n^2$. This implies that the rotation of $S$ is equivalent to $S$ up to relabeling. Therefore, special Sudoku grids over $\mathbb{Z}_5$ must be either of class 3 or class 4.

Note that the reflection of a special Sudoku grid is also a special Sudoku grid. Since each class has $120$ grids, and class 3 is reflection of class 4, we have the following theorem.
\begin{theorem}
    There are $240$ special Sudoku grids over $\mathbb{Z}_5$. These grids are all equivalent to the representative of class 3 up to relabeling and reflection.
\end{theorem}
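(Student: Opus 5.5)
We will count special Sudoku grids directly, using the structural description already derived: in a special Sudoku grid $S$ over $\mathbb{Z}_5$, each symbol class $S_k$ is a translate $\mathbf{u}_k+\mathcal{C}_0$ of one and the same linear $(2,5,3)$ perfect code $\mathcal{C}_0$. By Theorem~\ref{pcode}, there are exactly two linear perfect codes of this kind over $\mathbb{Z}_5$: $\langle(1,3)\rangle$ and $\langle(3,1)\rangle=\langle(1,2)\rangle$. Their five translates partition $\mathbb{Z}_5^2$.

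Conversely, we check that for either choice of $\mathcal{C}_0$, assigning the five cosets of $\mathcal{C}_0$ bijectively to the symbols $1,\dots,5$ gives a special Sudoku grid. There are three conditions to verify.
\begin{enumerate}
\item[(i)] Latin property. $\mathcal{C}_0$ meets every row and every column exactly once, since both coordinates of a generator are units. Hence every coset does too.
\item[(ii)] Radius-one balls. Every ball $\mathcal{B}_1(\mathbf{p})$ is a translate of the tile of the perfect code $\mathcal{C}_0$. Because the translates of $\mathcal{C}_0$ tile $\mathbb{Z}_5^2$ by radius-one balls, $|(\mathbf{p}-\mathbf{u}+\mathcal{C}_0)\cap\mathcal{B}_1(\mathbf{0})|=1$, so each coset meets each ball exactly once.
\item[(iii)] Any $(2,5,3)$ perfect code. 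By Theorem~\ref{pcode}, its codewords are centres of radius-one balls, so (ii) gives orthogonality to its palette grid.
\end{enumerate}
Thus special grids correspond exactly to a choice of $\mathcal{C}_0$ (two options) together with a bijection from cosets to symbols ($5!=120$ options). Distinct choices give distinct grids, because the underlying code is recovered as the direction of any symbol class. This gives $2\cdot 120=240$.

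For the equivalence statement, note that all $120$ grids with a fixed $\mathcal{C}_0$ differ only by relabeling, so they form a single relabeling class. The vertical reflection $s$ sends $(i,j)\mapsto(i,4-j)$, which maps $\langle(1,3)\rangle$ to $\langle(1,-3)\rangle=\langle(1,2)\rangle$. It therefore interchanges the two classes. The class-3 representative in Figure~\ref{fig:combined_classes_visualization_2} is one of these grids, and it is special as observed above. Hence every special grid is equivalent to it up to relabeling and reflection. The same reasoning confirms the earlier identification of classes 3 and 4 as precisely these two relabeling classes.

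The main obstacle is the converse step (ii)–(iii): showing that each coset meets every radius-one ball, not just those centred at codewords of the fixed code. We handle it by the translation argument above, reducing every ball to one centred at $\mathbf{0}$, together with the fact from Theorem~\ref{pcode} that any coset of a perfect code is again perfect, so exactly one of its points lies in $\mathcal{B}_1(\mathbf{0})$.
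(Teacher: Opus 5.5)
Your proposal is correct, and it reaches the count by a different route from the paper.

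\textbf{What is shared.} Both proofs start from the same structural step: each symbol class $S_k$ of a special grid is a coset of one and the same linear $(2,5,3)$ perfect code. You take this from the paper's preceding discussion rather than reproving it.

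\textbf{How the paper finishes.} The paper never counts directly. From the structural step it deduces that a special grid is unchanged up to relabeling by every translation and by the rotation. Hence it lies in a size-$1$ orbit of $\langle r,\tau_1^3\tau_2\rangle$ on $\bar{\mathcal{S}}$. The computed orbit table shows these orbits are exactly classes 3 and 4. The paper then asserts, by inspection, that both classes are special and that class 3 is the reflection of class 4. This gives $2\cdot 120$.

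\textbf{How you finish.}
\begin{itemize}
\item You prove the converse directly: every labeling of the five cosets of $\langle(1,3)\rangle$ or $\langle(1,2)\rangle$ is special. This uses the translation argument that every coset of a perfect code is again perfect, so it meets every radius-one ball exactly once.
\item You then obtain $240=2\cdot 5!$ as a bijective count.
\item You show that $s$ swaps the two coset families by the explicit computation $(1,3)\mapsto(1,-3)=(1,2)$.
\end{itemize}

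\textbf{Comparison.} Your route is self-contained and independent of the exhaustive enumeration. The only external input is the paper's remark that the class-3 representative is special, which you need just to attach the name ``class 3''. It also explains where the number comes from: one relabeling class per linear perfect code. It replaces the paper's ``we can check'' steps with short proofs.

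The paper's route instead ties special grids to the symmetry classification. It identifies them as precisely the fixed points of $\mathcal{G}_\mathcal{S}$ on relabeling classes. However, only the inclusion ``special $\Rightarrow$ fixed'' is proved there; the reverse inclusion rests on the computer search.

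One minor point: in your step (iii) you do not need Theorem~\ref{pcode}. The palette grid of any perfect code consists of the radius-one balls around its codewords by definition, so step (ii) applies immediately.
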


We enumerated all minimal Sudoku for the $5 \times 5$ perfect Sudoku grids, where the number $k$ of given hints ranges from $4$ to $7$. Examples of the cases $k=4$ and $k=7$ are shown in Figure~\ref{fig:5x5-minimal-example}.
\begin{figure}[ht]
\centering
\includegraphics[width=0.5\textwidth]{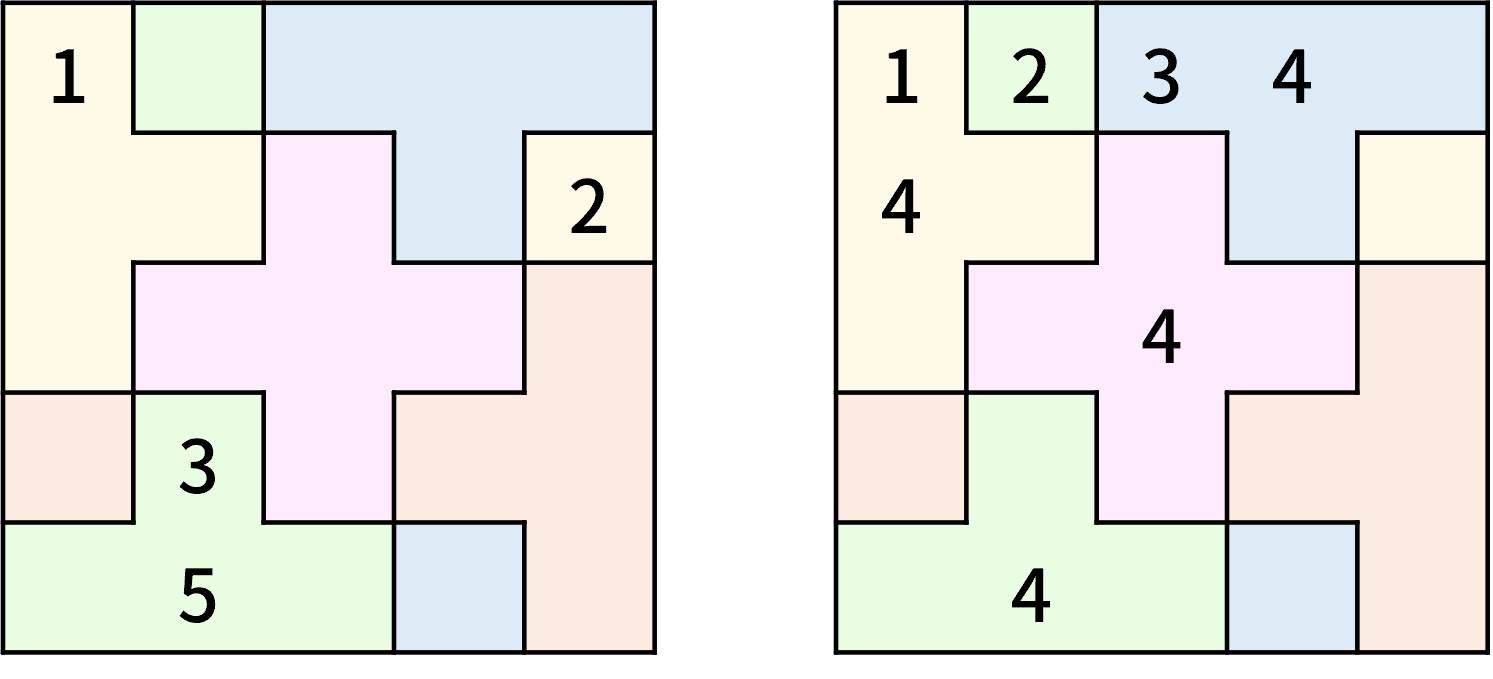}
\caption{Examples of minimal Sudoku from a perfect code over $\mathbb{Z}_5$ with $k = 4$ and $k=7$}
\label{fig:5x5-minimal-example}
\end{figure}

By Theorem~\ref{eqSudoku}, instead of exhaustively checking all possible hint placements on the $2{,}040$ valid perfect Sudoku grids, it is sufficient to search for the four representative perfect Sudoku grids of the equivalence classes derived from $\langle r, \tau_1^3\tau_2\rangle$. The resulting count of minimal Sudoku for each number of given hints is summarized in Table~\ref{tab:minimal-Sudoku-counts}.

\begin{table}[ht]
\renewcommand{\arraystretch}{1.4}
\centering
\begin{tabular}{|c|c|c|}
\hline
\textbf{$k$} & \textbf{Number of minimal Sudoku} & \textbf{Up to equivalence} \\
\hline
4 & 154,200     & 507    \\
5 & 5,721,600 & 14,860 \\
6 & 8,908,800 & 19,096 \\
7 & 1,113,600 & 1,296  \\
\hline
\textbf{Total} & \textbf{15,898,200} & \textbf{35,759} \\
\hline
\end{tabular}
\caption{Number of minimal Sudoku from perfect codes over $\mathbb{Z}_5$ by hint count $k$}
\label{tab:minimal-Sudoku-counts}
\end{table}

The counts in Table~\ref{tab:minimal-Sudoku-counts} were computed using Algorithm~\ref{alg:minimal}, which systematically evaluates all \(k\)-hint games generated from each representative class. For each of the four classes, a representative grid was selected, and all \(\binom{25}{k}\) games with \(k\) hints were checked for uniqueness and minimality. The resulting class-wise counts were then weighted by their respective equivalence class sizes: 120 for classes 1 and 2, 600 for class 3, and 1,200 for class 4.

\SetAlgoBlockMarkers{begin}{end}
\SetKwIF{If}{ElseIf}{Else}{if}{then}{else if}{else}{end if}
\SetKwFor{For}{for}{do}{end for}
\SetKwFor{While}{while}{do}{end while}

\begin{algorithm}
\caption{Number of minimal Sudoku by hint count \(k\)}\label{alg:minimal}
\KwData{\(k\)}
\KwResult{\(T\)}

Initialize total count \(T = 0\)

\For{each class}{

  Initialize \(T_c = 0\);

  Select a representative solution \(P_c\);

  Generate all \(P_{c,k}\) obtained from \(P_c\) by choosing \(k\) hints;

  \For{each game \(P_{c,k}\)}{
    Check if \(P_{c,k}\) has a unique solution;

    Check if all games obtained by removing one hint from \(P_{c,k}\) do not have a unique solution;

    If both, count \(P_{c,k}\) as a minimal game and increment \(T_c\);
  }

  Multiply \(T_c\) by the class size of \(c\);

  Add to total count: \(T \leftarrow T + T_c\);
}
\end{algorithm}

\subsection{Diameter perfect Sudoku from diameter perfect codes over \texorpdfstring{$\mathbb{Z}_8$}{Z8}}

We now consider diameter perfect Sudoku grids from $3$-diameter perfect codes $\mathcal{C}$ and $\mathcal{C}'$ whose generator matrices are given as follows:
\[
G=\begin{bmatrix}
2 & 2\\0 & 4	
\end{bmatrix}
\quad\mbox{and}\quad
G'=[1, 3].
\]
For each code $\mathcal{C}$ and $\mathcal{C}'$, we define the associated anticodes $\mathcal{A}$ and $\mathcal{A}'$, respectively. The core of each anticode corresponding to a codeword $\mathbf{c} \in \mathcal{C}$ (respectively, $\mathcal{C}'$) is defined by $\{\mathbf{c}, (1, 0)+\mathbf{c}\}$.

The corresponding palette grids $I_{\mathcal{C},\, \mathcal{A}}$ and $I_{\mathcal{C}',\, \mathcal{A}'}$ derived from the diameter perfect codes $\mathcal{C}$ and $\mathcal{C}'$ are depicted in Figure~\ref{fig:3-diameter perfect code}.

\begin{figure}[ht]
\centering
\includegraphics[width=0.7\textwidth]{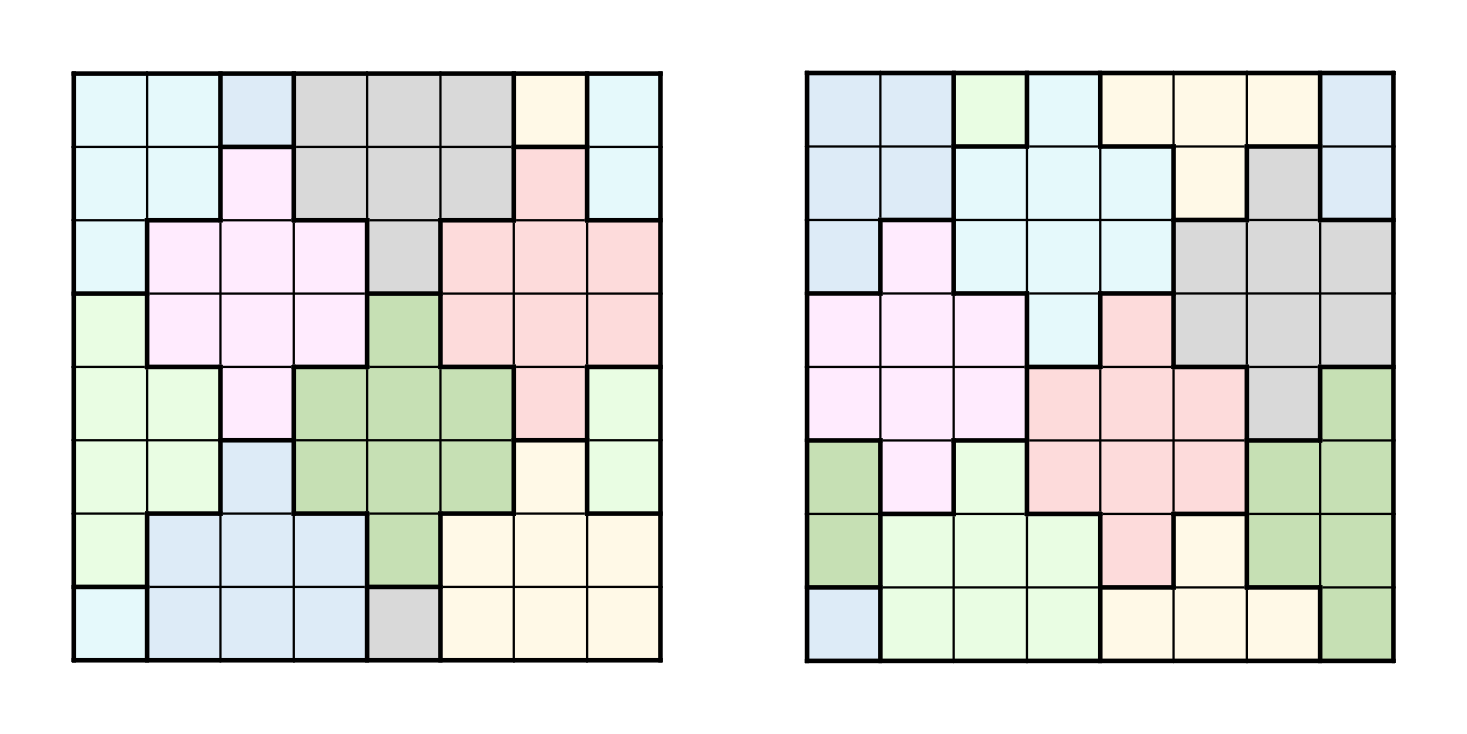}
\caption{Palette grids $I_C$, $I_{C'}$ of the $3$-diameter perfect codes}
\label{fig:3-diameter perfect code}
\end{figure}

We begin with the analysis of diameter perfect Sudoku grids derived from the diameter perfect code $\mathcal{C}$ and its corresponding anticode $\mathcal{A}$. Using a backtracking algorithm, we found a total of 6,940,096 solutions up to relabeling. Accounting for all relabelings (a factor of $8!$), this corresponds to $6,\!940,\!096 \times 8! \approx 2.80 \times 10^{11}$ valid grids.

Next, we classify the diameter perfect  Sudoku grids according to the equivalence relations induced by $\langle \tau_1^2\tau_2^2, \tau_2^4 \rangle$, $\langle \tau_2s \rangle$, $\langle \tau_1^2\tau_2r^2 \rangle$, and $\langle \tau_1^2\tau_2^2, \tau_2^4,\tau_2s, \tau_1^2\tau_2r^2\rangle$.

Table~\ref{tab:8x8-CA-symmetry-comparison} presents the distribution of equivalence classes under the actions.

\begin{table}[ht]
\renewcommand{\arraystretch}{1.4}
\centering
\begin{tabular}{|c|c|c|c|c|}
\hline
\rule{0pt}{15pt}\textbf{Class size} & $\langle \tau_1^2\tau_2r^2 \rangle$ & $\langle \tau_2s \rangle$ & $\langle \tau_1^2\tau_2^2, \tau_2^4 \rangle$ &  $\langle \tau_1^2\tau_2^2, \tau_2^4, \tau_2s, \tau_1^2\tau_2r^2\rangle$ \\
\hline
32 &               &               &         & 202,658 \\
16 &               &               &         & 26,927 \\
8  &               &               & 844,905 & 2,906 \\
4  &               &               & 44,411  & 236 \\
2  & 3,467,680     & 3,470,048     & 1,554   & 8 \\
1  & 4,736         &               & 104     & \\
\hline
\textbf{\textbf{\begin{tabular}{c}Distinct\\[-4pt]classes\end{tabular}}} & \textbf{3,472,416} & \textbf{3,470,048} & \textbf{890,974} & \textbf{232,735} \\
\hline
\end{tabular}
\caption{Equivalence class size of diameter perfect Sudoku grids with respect to $\mathcal{C}$ over $\mathbb{Z}_8$}
\label{tab:8x8-CA-symmetry-comparison}
\end{table}

Next, we turn our attention to the diameter perfect Sudoku grids constructed from the diameter perfect code $\mathcal{C}'$ and its anticode $\mathcal{A}'$. Using the same backtracking algorithm applied in the case of $\mathcal{C}$ and $\mathcal{A}$, we found a total of $4,\!839,\!127$ distinct solutions up to relabeling. Accounting for the $8!$ possible symbol relabelings, this corresponds to $4,\!839,\!127 \times 8! \approx 1.95 \times 10^{11}$ valid diameter perfect Sudoku grids.

We further classified these diameter perfect  Sudoku grids under the actions of $\langle \tau_1\tau_2^3 \rangle$, $\langle \tau_1^2 \tau_2 r^2 \rangle$, $\langle \tau_1\tau_2^3, \tau_1^2 \tau_2 r^2 \rangle$. The number of equivalence classes, grouped by their sizes, is shown in Table~\ref{tab:8x8-CpAp-symmetry-comparison}.

\begin{table}[ht]
\centering
\renewcommand{\arraystretch}{1.4}
\begin{tabular}{|c|c|c|c|}
\hline
\textbf{Class size} & $\langle \tau_1^2 \tau_2 r^2 \rangle$ & $\langle \tau_1\tau_2^3 \rangle$ & $\langle \tau_1\tau_2^3, \tau_1^2 \tau_2 r^2 \rangle$ \\
\hline
16 &        &        & 300,965 \\
8  &        & 603,554  & 2,886 \\
4  &        & 2,601    & 139 \\
2  & 2,418,613 & 132     & 19 \\
1  & 1,901    & 27       & 5 \\
\hline
\textbf{Distinct classes} & \textbf{2,420,514} & \textbf{606,314} & \textbf{304,014} \\
\hline
\end{tabular}
\caption{Equivalence class size of diameter perfect Sudoku grids with respect to $\mathcal{C}'$ over $\mathbb{Z}_8$}
\label{tab:8x8-CpAp-symmetry-comparison}
\end{table}

Analogously to the $\mathbb{Z}_5$ case, we define a special Sudoku grid with respect to $\mathbb{Z}_8$ as a grid that is a diameter Sudoku grid with respect to any $(2, 8, 4)$-diameter perfect code over $\mathbb{Z}_8$.

Through a computational search on the $304,014$ classes of diameter perfect Sudoku grids (defined with respect to $\mathcal{C}'$ for representative convenience), we verified that there are three classes of special Sudoku grids. The two classes of size 1 are reflections of each other, while the class of size 2 is invariant under reflection. From this, we obtain the following.
\begin{theorem}
    There are $4\times 8!$ special Sudoku grids over $\mathbb{Z}_8$. Up to relabeling and the action of $\mathcal{G}'=\langle s, r^2\rangle$, there exist exactly two such inequivalent special Sudoku grids. Two special Sudoku grids $S$ and $S'$ over $\mathbb{Z}_8$ corresponding to class size 1 and 2, respectively, are given as follows:
    \[
S=\begin{bmatrix}
1 & 2 & 3 & 4 & 5 & 6 & 7 & 8\\
4 & 5 & 6 & 7 & 8 & 1 & 2 & 3\\
7 & 8 & 1 & 2 & 3 & 4 & 5 & 6\\
2 & 3 & 4 & 5 & 6 & 7 & 8 & 1\\
5 & 6 & 7 & 8 & 1 & 2 & 3 & 4\\
8 & 1 & 2 & 3 & 4 & 5 & 6 & 7\\
3 & 4 & 5 & 6 & 7 & 8 & 1 & 2\\
6 & 7 & 8 & 1 & 2 & 3 & 4 & 5
\end{bmatrix},
\]
and
\[
S'=\begin{bmatrix}
1 & 2 & 3 & 4 & 5 & 6 & 7 & 8\\
4 & 7 & 6 & 1 & 8 & 3 & 2 & 5\\
3 & 8 & 5 & 2 & 7 & 4 & 1 & 6\\
2 & 1 & 4 & 3 & 6 & 5 & 8 & 7\\
5 & 6 & 7 & 8 & 1 & 2 & 3 & 4\\
8 & 3 & 2 & 5 & 4 & 7 & 6 & 1\\
7 & 4 & 1 & 6 & 3 & 8 & 5 & 2\\
6 & 5 & 8 & 7 & 2 & 1 & 4 & 3
\end{bmatrix}.
\]
\end{theorem}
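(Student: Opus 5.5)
The plan is to reduce the statement to a finite, structured classification of how $\mathbb{Z}_8^2$ can be partitioned into symbol classes, in the same spirit as the argument given above for special Sudoku grids over $\mathbb{Z}_5$.

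\textbf{Step 1: each symbol class is a diameter perfect code.} Let $S$ be a special Sudoku grid over $\mathbb{Z}_8$, and for $1\le k\le 8$ put $S_k=\{(i,j)\in\mathbb{Z}_8^2 \mid S_{i,j}=k\}$. The translates of a $(2,8,4)$-diameter perfect code are again such codes. Hence every translate of $\mathcal{A}_3$ with horizontal core $\{\mathbf{c},\mathbf{c}+(1,0)\}$ occurs as a region of some palette grid $\mathcal{I}_{\mathcal{C},\mathcal{A}}$. So every such translate contains each symbol exactly once.

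In particular, two points of $S_k$ never lie in a common translate of $\mathcal{A}_3$. A direct check on the $8$-point anticode should show that any two points at Lee distance at most $3$ lie in a common horizontal-core translate of $\mathcal{A}_3$. Therefore $S_k$ has minimum distance at least $4$. Since $|S_k|=8$ and $|\mathcal{A}_3|=8=n$, equality holds in the Code-Anticode Bound (Theorem~\ref{cabound}). So each $S_k$ is a $3$-diameter perfect code.

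\textbf{Step 2: identify the possible classes.} Using the remark that for $t=1$ every such code is of Case I or Case II, each $S_k$ is a translate of one of a short explicit list of codes:
\begin{itemize}
\item $\langle (2,2),(0,4)\rangle$ (Case I);
\item $\langle (1,3)\rangle=\langle(3,1)\rangle$ (Case II), together with its images under the rigid motions that realise coordinate equivalence.
\end{itemize}
For each such code, only the Latin-compatible translates matter: all $8$ rows and all $8$ columns must be hit exactly once. For instance, a Case I coset of $\langle(2,2),(0,4)\rangle$ meets rows $0,2,4,6$ twice and so is excluded from a Latin square. I would first list, for each candidate code, which cosets are transversals of the row and column partitions. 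Only these cosets can serve as symbol classes.

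\textbf{Step 3: classify the partitions.} Classify all partitions of $\mathbb{Z}_8^2$ into $8$ pairwise disjoint admissible cosets, and keep those that give Latin squares; up to relabeling, a grid is exactly such a partition. This splits into two regimes.
\begin{itemize}
\item \emph{All classes are cosets of one subgroup $H$.} Then the partition is forced to be $\mathbb{Z}_8^2/H$, giving one grid per admissible $H$. This should account for $S$ and its reflection.
\item \emph{Mixed partitions.} Here cosets of different subgroups are used. Two cosets $\mathbf{u}+H_1$ and $\mathbf{v}+H_2$ are disjoint iff $\mathbf{u}-\mathbf{v}\notin H_1+H_2$. This restricts mixing severely, but need not forbid it, since $\mathbb{Z}_8^2$ is not a vector space. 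I expect this regime to produce the grids of the type $S'$.
\end{itemize}

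\textbf{Step 4: verify, count and group into orbits.} For each candidate partition, check directly that it is a Sudoku grid for every $(2,8,4)$ diameter perfect code. Some partitions may satisfy Steps 1 and 2 yet fail because anticode regions of other shapes appear; this check removes them.

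Then count the survivors up to relabeling, expecting exactly $4$, which gives $4\times 8!$ grids. Finally, apply $s$ and $r^2$ (both preserve the horizontal-core convention) to group the $4$ grids into orbits under $\mathcal{G}'=\langle s,r^2\rangle$. We expect two orbits:
\begin{itemize}
\item $\{S, s(S)\}$, matching the two size-$1$ classes that are reflections of each other;
\item the orbit of $S'$, consisting of the two grids in its size-$2$ class.
\end{itemize}
As a consistency check, this should agree with the computer search over the $304{,}014$ classes.

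\textbf{Main obstacle.} The hard step is the mixed-partition analysis in Step 3, i.e., ruling in or out every combination of cosets from different subgroups. I would attack it first by fixing $S_1$ to contain $(0,0)$ and casing on its subgroup. Then row $0$ must be filled by cosets whose representatives lie at the other $7$ positions of that row. I would propagate the disjointness and Latin constraints by hand, and fall back on the computational search for confirmation.
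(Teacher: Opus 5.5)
Your route differs from the paper's. The paper gives no structural argument: it scans the $304{,}014$ $\mathcal{G}_\mathcal{S}$-classes for $\langle(1,3)\rangle$ by computer, finds three special classes (two of size $1$ swapped by reflection, one of size $2$), and reads off both claims. Your classification gives a hand-checkable proof of the count $4\times 8!$ and explains where $S$ and $S'$ come from. It is also easier than you expect, once four points are tightened.
\begin{itemize}
\item[(a)] In Step 1 the ``direct check'' fails for the difference $(0,\pm3)$, since a translate of $\mathcal{A}_3$ spans only three columns. Such pairs lie in one row, so the Latin property excludes them.
\item[(b)] You do not need the paper's unproved Case I/II remark. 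Its permutation-equivalence version of Case II does not even contain $H_5=\langle(1,5)\rangle$, which is the subgroup underlying $S$. Instead write $S_k=\{(i,\pi(i))\}$. Distance $\ge 4$ forces $\pi(i+1)-\pi(i)\in\{3,4,5\}$ and $\pi(i+2)-\pi(i)\notin\{0,\pm1\}$, hence a constant step $3$ or $5$. So $S_k$ is a coset of $H_3=\langle(1,3)\rangle$ or of $H_5$.
\item[(c)] Your ``main obstacle'' dissolves. $H_3+H_5=K=\{(x,y): x\equiv y \pmod 2\}$ has index $2$, and every coset of $H_3$ or $H_5$ lies in $K$ or in $K+(0,1)$. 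Inside one half, an $H_3$-coset always meets an $H_5$-coset. So each half is tiled by cosets of a single $H_i$, giving exactly four partitions, all Latin.
\item[(d)] Step 4's verification is automatic. Every region is a translate of $\mathcal{A}_3$ and has diameter $3$, so it meets each $S_k$ at most once, hence exactly once, for every diameter perfect code. No list of codes is needed.
\end{itemize}

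The genuine problem is your orbit prediction. Record a grid as the pair (subgroup used on $K$, subgroup used on $K+(0,1)$). Then $S=(H_5,H_5)$ and $S'=(H_3,H_5)$.
\begin{itemize}
\item The motion $r^2$, i.e.\ $(i,j)\mapsto(7-i,7-j)$, preserves $K$, $H_3$ and $H_5$. So it fixes all four relabeling classes.
\item The motion $s$, i.e.\ $(i,j)\mapsto(i,7-j)$, swaps $K$ with its complement and $H_3$ with $H_5$. So it sends $(A,B)$ to $(\bar B,\bar A)$, where the bar exchanges $H_3$ and $H_5$.
\end{itemize}
Hence $s$ fixes $[S']$: for instance, the symbol-$1$ class of $s(S')$ is the symbol-$8$ class of $S'$. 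The fourth grid $(H_5,H_3)$, e.g.\ $\tau_2(S')$, forms its own orbit. Under relabeling and $\langle s,r^2\rangle$ there are therefore \emph{three} orbits, not two, and your expected ``orbit of $S'$ consisting of the two grids in its size-$2$ class'' does not occur. The partner of $S'$ is produced by $\tau_1^2\tau_2r^2\in\mathcal{G}_\mathcal{S}$, which involves a translation; no element of $\langle s,r^2\rangle$ does this. The paper reaches ``two'' by merging its $\mathcal{G}_\mathcal{S}$-classes, which already contain translations, under reflection. So the group it actually uses is generated by $\mathcal{G}_\mathcal{S}$ and $s$, e.g.\ $\langle s,\tau_1^2\tau_2r^2\rangle$. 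Your Step 4 must be stated with such a group to yield exactly two classes.
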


Unlike the perfect Sudoku from perfect codes over $\mathbb{Z}_5$, enumerating all minimal Sudoku from
diameter perfect codes over $\mathbb{Z}_8$ is computationally infeasible. Instead, we identified a variety of minimal Sudoku with the number $k$ of given hints ranging from $10$ to $18$. Examples of the cases $k=10$ and $k=18$ are shown in Figure~\ref{fig:8x8-minimal-example}.

\begin{figure}[ht]
\centering
\includegraphics[width=0.7\textwidth]{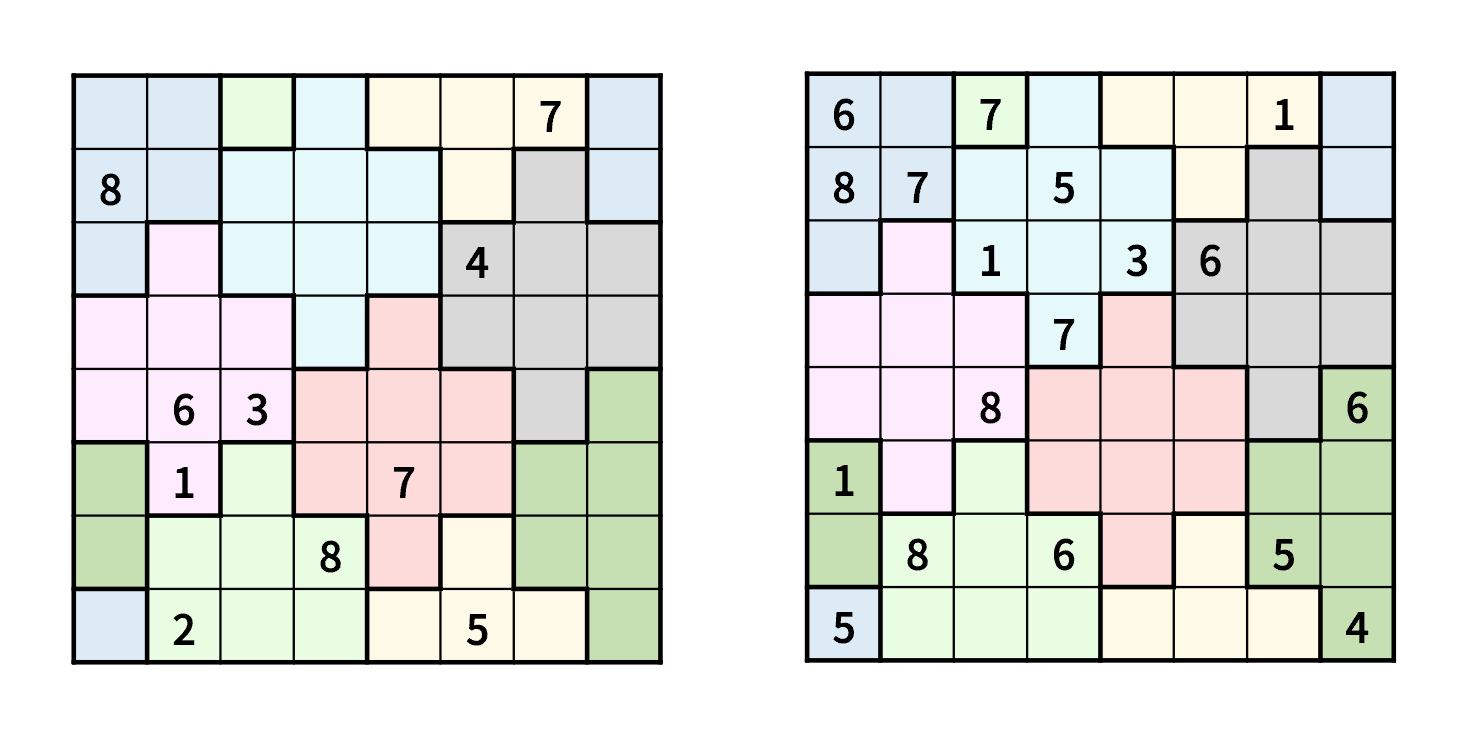}
\caption{Examples of minimal Sudoku from diameter perfect codes over $\mathbb{Z}_8$ with $k = 10$ and $k=18$}
\label{fig:8x8-minimal-example}
\end{figure}

\section{Practical game aspects of the \texorpdfstring{$5 \times 5$}{5x5} perfect Sudoku}
To assess our newly constructed $5 \times 5$ perfect Sudoku as a practical game, we analyzed its solving complexity using a naive human-like solver that mimics logical deduction processes typically employed by general human solvers without special strategies. This solver allowed us to measure the number of trials required to complete minimal Sudoku configurations, providing a metric for difficulty and challenge. The goal was to determine whether these games possess meaningful levels of challenge and diversity in difficulty.

Specifically, we applied this algorithm to the entire dataset of $5 \times 5$ nonequivalent minimal Sudoku obtained from our constructions given in Table~\ref{tab:minimal-Sudoku-counts} and measured the number of digit insertions required to reach a complete solution for each instance.

The naive human-like solver employed in our analysis operates in two alternating phases: the deterministic phase and the trial phase.

The deterministic phase is based on the two most fundamental human-like solving techniques, as described in~\cite{CFD18}:
\begin{enumerate}
  \item \textbf{Naked Single:} The algorithm scans each blank cell. If a cell has only one possible candidate value after checking the
constraints of its row, column, and subgrid, then that value is assigned.
  \item \textbf{Hidden Single:} For each digit $d$, the algorithm identifies all possible cells where $d$ could be placed. If, within any given
row, column, or subgrid, there is only one such cell, then $d$ is assigned to it.
\end{enumerate}
These two procedures are applied iteratively until the grid reaches a fixed point—a state where no further assignments can be made using the deterministic rules.

The trial phase is initiated only when the deterministic approach proves insufficient to solve the game. This phase begins when the deterministic process reaches a fixed point while the grid remains incomplete. This phase emulates a human's trial-and-error process, guided by a common heuristic to make an educated guess. The procedure is as follows:
\begin{enumerate}
  \item \textbf{Candidate Analysis:} The algorithm first determines the set of valid candidates for each blank cell based on the current grid state.
  \item \textbf{Heuristic-Based Cell Selection:} It then identifies the cell (or cells) with the fewest possible candidates—the most constrained cell. From this set of most-constrained cells, one is selected uniformly at random.
  \item \textbf{Tentative Assignment:} Finally, the algorithm randomly chooses one of the candidate digits for the selected cell and assigns it.
\end{enumerate}
Following this assignment, the deterministic phase is reapplied iteratively until it once again reaches a fixed point. This alternation between a tentative assignment and deterministic deduction continues until the game is solved.

If a guess leads to a contradiction (e.g., a cell has no valid candidates), the assignment is considered a failure. The algorithm then backtracks, reverting the grid to its state before the incorrect guess and attempting a different candidate. The number of such trial-and-error attempts serves as a practical measure of a game's difficulty.

For each minimal Sudoku, we applied our naive human-like solver 100 times and recorded the number of digit insertions required to reach a complete solution. We posted the solver in Python at https://cicagolab.pythonanywhere.com.
The average number of insertions per game was then adjusted by subtracting the number of initially blank cells. We refer to this number as the \textit{difficulty score}. Under this definition, games solvable purely by the deterministic phase have a difficulty score of zero. The resulting difficulty scores range from 0 to 38.22. For example, if the score is 10, then ten more insertions than initially blank cells are made. Based on this score, games are classified into three levels: easy if the score is 0, medium if it is greater than 0 and at most 10, and hard if it exceeds 10.
The results are presented in Table~\ref{tab:difficulty_distribution} and Figure~\ref{score_scatter}.

\begin{table}[ht]
\renewcommand{\arraystretch}{1.4}
\centering
\begin{tabular}{|c|c|c|c|c|}
\hline
$k$ & \textbf{Easy} & \textbf{Medium} & \textbf{Hard} & \textbf{Total} \\[0.15em]
\hline
4 & 219 (43.19\%) & 106 (20.9\%) & 182 (35.9\%) & 507 \\[0.5em]

5 & 8,868 (59.67\%) & 4,274 (28.76\%) & 1,718 (11.56\%) & 14,860 \\[0.5em]

6 & 11,270 (59.01\%) & 7,175 (37.57\%) & 651 (3.41\%) & 19,096 \\[0.5em]

7 & 1,020 (78.7\%) & 274 (21.14\%) & 2 (0.15\%) & 1,296\\
\hline
\end{tabular}
\caption{Distribution of game difficulties by level}
\label{tab:difficulty_distribution}
\end{table}

\begin{figure}[ht]
\centering
\includegraphics[width=0.7\textwidth]{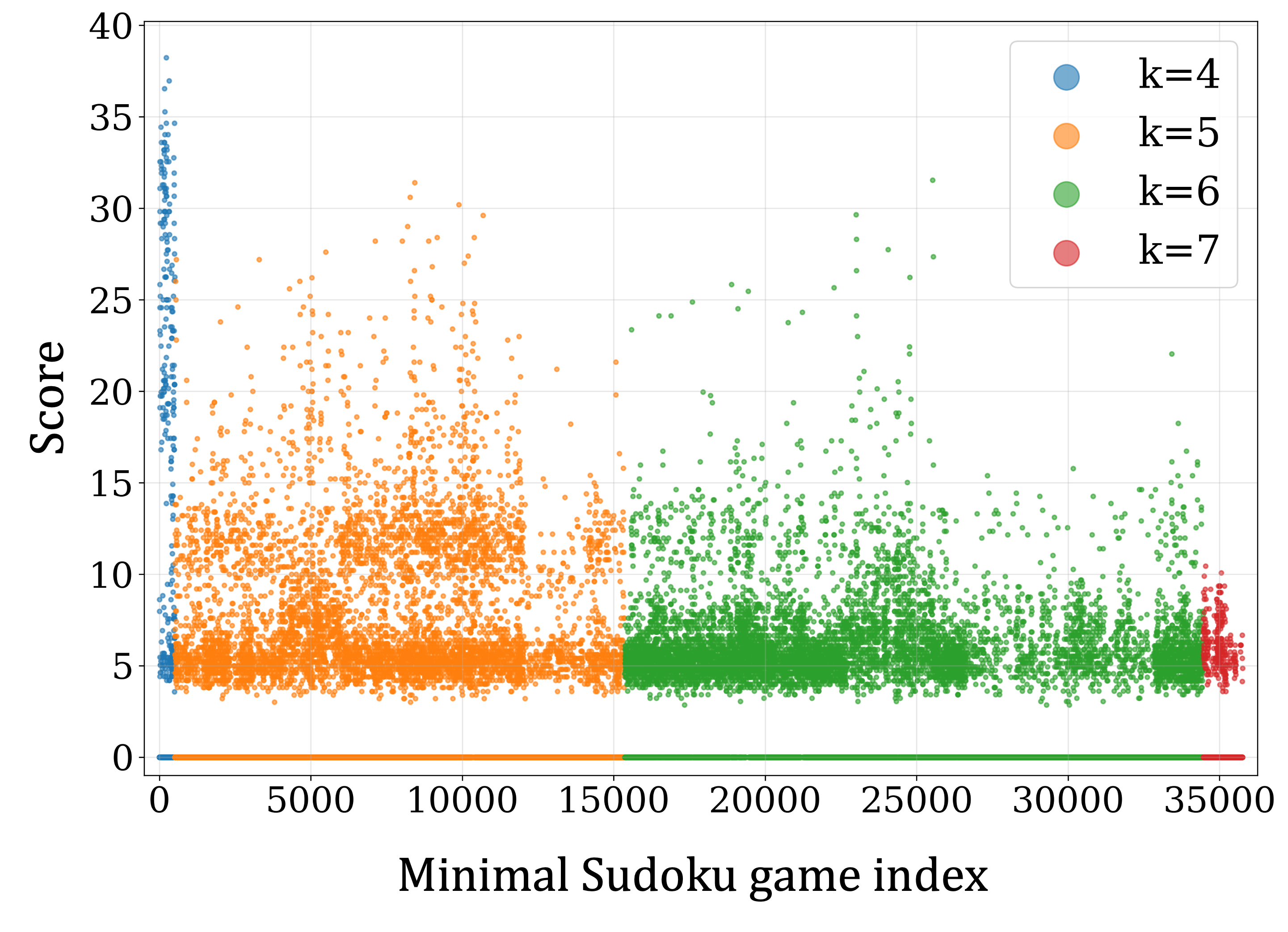}
\caption{Difficulty score of $5\times 5$ minimal Sudoku}
\label{score_scatter}
\end{figure}

In Figure~\ref{score_scatter}, the numbers below each colored dot indicate the index of the corresponding minimal Sudoku, and $k$ denotes the number of given hints in that game.

To provide a more tangible sense of what this score represents, we offer a simple estimation of the solving time. Assuming a player takes, on average, 5 seconds to place each digit, we can calculate the solving time. The total number of digits a player must place is the sum of the number of initially blank cells and the game's difficulty score. For a hypothetical game with $k=5$ hints (20 blank cells) and a difficulty score of 17, the total insertions would be $20 + 17 = 37$. This corresponds to an estimated solving time of $37 \times 5 = 185$ seconds $\approx$ 3 minutes. This represents a reasonable solving time for most players.

This shows that the constructed perfect Sudoku games exhibit a broad distribution in the difficulty score of digit insertions required to solve them. This confirms that our method produces practical games with various difficulties.

The code and minimal Sudoku datasets used in our analysis are available in~\cite{KB25}.

\section{Conclusion}
In this paper, we have investigated the novel connection between coding theory and Sudoku games. We have demonstrated that perfect codes and diameter perfect codes under the Lee distance serve as a systematic framework for constructing symmetric Sudoku-type games. Beyond theoretical construction, our analysis of valid Sudoku grids and their classification under equivalence relations highlights that this framework preserves the high-level symmetries intrinsic to traditional Sudoku and generates a structurally diverse set of valid games. Furthermore, the implementation of a human-like solver has confirmed that our novel Sudoku games are practically playable because they offer a broad spectrum of difficulty levels suitable for actual gameplay.

Future work includes extending our Sudoku-type game construction to codes beyond $\mathbb{Z}_n$ with other distance metrics. Perfect codes exist and have been studied over various rings equipped with other distances including Mannheim distances~\cite{GH14} or rank-metrics~\cite{MZ25}. This observation can provide Sudoku-type games of different sizes and patterns, potentially leading to new games.

\section*{Acknowledgments}
Jon-Lark Kim was supported in part by the BK21 FOUR (Fostering Outstanding Universities for Research) funded by the Ministry of Education (MOE, Korea), National Research Foundation of Korea (NRF) under Grant No. 4120240415042, Basic Science Research Program through the National Research Foundation of Korea (NRF) funded by the Ministry of Science and ICT under Grant No. RS-2025-24534992, and Global - Learning \& Academic research institution for Master’s·PhD students, and Postdocs(LAMP) Program of the National Research Foundation of Korea(NRF) grant funded by the Ministry of Education(No. RS-2024-00441954).

\end{document}